\numberwithin{equation}{section}
\newtheorem{theorem}{Theorem}[section]
\newtheorem{lemma}[theorem]{Lemma}
\theoremstyle{remark}
\definecolor{darkblue}{rgb}{0,0,0.7}
\newcommand{\bke}[1]{\left( #1 \right)}
\newcommand{\bket}[1]{\left\{ #1 \right\}}
\newcommand{\norm}[1]{\| #1 \|}
\newcommand{\al}{\alpha}
\newcommand{\be}{\beta}
\newcommand{\de}{\delta}
\newcommand{\e}{\epsilon}
\newcommand{\ga}{{\gamma}}
\newcommand{\Ga}{{\Gamma}}
\newcommand{\la}{\lambda}
\newcommand{\Om}{{\Omega}}
\newcommand{\si}{\sigma}
\newcommand{\De}{\Delta}
\renewcommand{\th}{\theta}
\newcommand{\R}{{\mathbb R }}
\newcommand{\NN}{{\mathbb N}}
\newcommand{\ZZ}{{\mathbb Z}}
\newcommand{\pd}{{\partial}}
\newcommand{\nb}{{\nabla}}
\newcommand{\lec}{\lesssim}
\newcommand{\dis}{\displaystyle}
\renewcommand{\div}{\mathop{\mathrm{div}}}
\newcommand{\Bog}{\mathop{\mathrm{Bog}}\nolimits}
\newcommand{\EQ}[1]{\begin{equation}\begin{split} #1 \end{split}\end{equation}}
\newcommand{\EQN}[1]{\begin{equation*}\begin{split} #1 \end{split}\end{equation*}}
\newcommand{\EQL}[2]{\begin{equation}\label{#1}\begin{split} #2 \end{split}\end{equation}}
\newcommand{\xRightarrow}[2][]{\ext@arrow 0359\Rightarrowfill@{#1}{#2}}
\newcommand{\loc}{\mathrm{loc}}
\newcommand{\hA}{\widehat{\mathbf A}} %
\newcommand{\hE}{\widehat{\mathbf E}}
\newcommand{\CB}{C_{\mathrm{bg}}}
\begin{document}
\title{Liouville type theorems for stationary Navier-Stokes equations}
\author{Tai-Peng Tsai%
\thanks{\noindent Department of Mathematics, University of British Columbia, Vancouver, BC V6T 1Z2, Canada. ttsai@math.ubc.ca.
 The work of Tsai was partially supported by NSERC grant RGPIN-2018-04137.}}
\date{Dedicated to Hideo Kozono on the occasion of his 60th birthday}
\maketitle 

\begin{abstract} 
We show that any smooth stationary solution of the 3D incompressible Navier-Stokes equations
in the whole space, the half space, or a periodic slab must vanish under the condition that for some 
$0 \le \delta \le 1<L$ and $q=6(3-\delta)/(6-\delta)$,
$$\liminf_{R \to \infty} \frac 1R \|u\|^{3-\delta}_{L^{q}(R<|x|<LR)}=0.$$ 
We also prove sufficient conditions allowing shrinking radii ratio $L= 1+R^{-\alpha}$.
Similar results hold on a slab with zero boundary condition by assuming stronger decay rates. 
We do not assume global bound of the velocity. 
The key is to estimate the pressure locally in the annuli with radii ratio $L$ arbitrarily close to 1.
\end{abstract}

\section{Introduction}\label{S1}
Consider the Liouville problem of 3D stationary incompressible Navier-Stokes equations
\EQ{\label{SNS}
 - \De u+(u\cdot \nb )u+\nb p=0, \quad \div u=0,  \quad \text{in }\Om, 
}
where the domain $\Om$ is either the whole space $\R^3$, the half space $\R^3_+$ with zero boundary condition, or the slab $\Om=\R^2\times (0,1)$ with zero or periodic boundary condition (BC).
In the classical setting $\Om=\R^3$, one asks if the only $H^1_{\loc}$ solution satisfying
\EQ{\label{D.soln}
\int_\Om |\nb u|^2 < \infty, \quad \lim_{|x|\to \infty} u(x)=0
}
is zero.  A solution satisfying \eqref{D.soln} is called a \emph{$D$-solution}. This problem has been reformulated by Seregin and Sverak%
 to whether the only solutions satisfying
\EQ{
u \in H^1_{\loc}\cap L^\infty(\Om)
}
are constant vectors. The same problems can be posed in other domains, and can be asked in the subclass of axisymmetric flows.

We now review the literature.
In the 2 dimensional case, the problem \eqref{D.soln} in the plane $\R^2$ is solved by Gilbarg and
Weinberger \cite{GW}.
For the 3 dimensional problem, it is not even known if a general
D-solution satisfying \eqref{D.soln} has any explicit decay rate. The following is a list of vanishing results with extra integral or decay
assumptions on the solution. 
Galdi \cite[Theorem X.9.5]{Galdi} proved that if $u$ is a D-solution in $\R^3$ and $u \in L^{9/2}(\R^3)$, then $u = 0$. The same proof works for dimension $n\ge 4$ assuming only \eqref{D.soln} without additional integrability condition.
This result was improved
by a log factor in Chae and Wolf \cite{CW}, assuming
\[
\int_{\R^3}|u|^{\frac 92} \bket{\ln (2+1/|u|)}^{-1}dx<\infty.
\]
In \cite{Chae}, Chae proved that a D-solution with $\De u \in L^{6/5}(\R^3)$ is zero. %
Seregin \cite{Seregin}
proved that a solution in $\R^3$ is 0 if $u \in L^6(\R^3) \cap BMO^{-1}$. Kozono, Terasawa and Wakasugi \cite{KTW} showed that a D-solution $u$ in $\R^3$ is zero if either the vorticity decays like $c|x|^{-5/3}$ at infinity, or $\norm{u}_{L^{9/2,\infty}}\le c$, with $c=\e \norm{\nb u}_2^{2/3}$ and $\e$ a small constant. In \cite{Seregin2,CW2}, the authors prove Liouville type theorems for smooth
solutions $u$ under growth conditions on the $L^s$ mean oscillation over $B_r$ of the \emph{potential tensor} of $u$.
Lin, Uhlmann and Wang \cite{LUW} proved the following lower bound using Carleman estimates for a bounded solution $u$ in $\Om= \R^3 \setminus \overline B_1$: Let $M(r) = \inf _{|x|=r} \int_{B_1(x)} |u|^2$ and $\la = \norm{u}_{W^{1,\infty}(\Om)}$.
Then there exist $C(\la)>0$ and $R_0(\la, M(10))>10$ such that
\EQ{\label{LUWeq2}
M(r) \ge \exp (-C r^2 \log r) , \quad \forall r>R_0.
}
This result does not assume any boundary condition. 
It implies that a 
bounded
solution in an exterior domain in $\R^3$ must be zero if it satisfies
\EQ{\label{LUWeq3}
\liminf_{r \to \infty} \exp (C r^2 \log r)\,M(r)<1, \quad \forall C>0.
}
It is probably the first Liouville type result with a liminf condition.
Let $L^{q,l}$ denote the Lorentz spaces.
Seregin and Wang \cite{SW} prove the vanishing of $u$ assuming either for $3<q<\infty$, $3 \le l \le \infty$ (or $q=l=3$), 
\EQ{\label{SW1}
\liminf_{R\to \infty} R^{\frac 23 - \frac 3q} \norm{u}_{L^{q,l}(B_R \setminus B_{R/2})}\le \e \norm{\nb u}_2^{2/3},
}
with $\e$ a small constant,
or for $12/5<q<3$, $1 \le l \le \infty$, $\ga > \frac 13 + \frac 1q$,
\EQ{\label{SW2}
\liminf_{R\to \infty} R^{\ga - \frac 3q} \norm{u}_{L^{q,l}(B_R \setminus B_{R/2})}=0.
}
They don't assume the solution is globally bounded.
Note that \eqref{SW1} for $q>3$ follows from $q=3$ case as
\EQ{\label{SW3}
R^{ - \frac 13} \norm{u}_{L^{3}(B_R \setminus B_{R/2})} 
\lec R^{\frac 23 - \frac 3q} \norm{u}_{L^{q,l}(B_R \setminus B_{R/2})}.
}

For other domains,
the proof of Galdi \cite[Theorem X.9.5]{Galdi} can be extended to $\R^3_+$ and slabs easily. We are not aware of any other previous results for the half space.
On a slab with zero BC,  Pileckas and Specovius-Neugebauer \cite{Pil,PiSN} studied the asymptotic decay of solutions. They proved, under certain weighted integral assumption on the velocity $u$ and its derivatives with a force in \eqref{SNS}, $u(x)$ decays like $1/|x|$. Then the vanishing of $u$ with zero force follows easily.  This was extended by Carrillo, Pan, Zhang, and Zhao \cite[Theorem 1.1]{CPZZ}, showing that any D-solution satisfying \eqref{D.soln} in a slab with zero BC is zero.

There is also a rich literature on the Liouville problem for the subclass of axisymmetric solutions. As we will not discuss it here, we only refer to \cite{Wang, CPZZ,KTW2} and their references.

The following is our first main result.

\begin{theorem}[Whole space] \label{th1}
Suppose $u\in H^1_{\loc}(\R^3)$ is a weak solution of \eqref{SNS} in $\Om=\R^3$. 

(a) If for some constants $0  \le \de \le 1$ and $L>1$,
\EQ{\label{th1eq1}
\liminf_{R \to \infty} \frac 1R \norm{u}^{3-\de}_{L^{q}(R<|x|<LR)}=0,
\quad q(\de)=\frac{3-\de}{1-\de/6},
}
then $u=0$.
 
(b) If for some constants $0  \le \de \le 1$ and $\al\ge0$,
\EQ{\label{th1eq2}
\liminf_{R \to \infty} R^\be \norm{u}_{L^{q}(R<|x|<R+R^{1-\al})}=0,
\quad q(\de)=\frac{3-\de}{1-\de/6},
}
where $\be=\be(\de,\al)= \max \bket{\frac{\frac {3-\al}{q}-2+3\al}{2-\de},\ \frac {-1+2\al}{3-\de}}$, then $u=0$.
\end{theorem}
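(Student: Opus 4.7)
The plan is a Caccioppoli-style cutoff argument on the annulus $A_R = \{R < |x| < LR\}$, combined with a sharp pressure oscillation estimate that retains explicit dependence on $L-1$. Choose $\phi = \phi_R \in C_c^\infty(B_{LR})$ with $\phi \equiv 1$ on $B_R$ and $|\nb^k \phi|\lec ((L-1)R)^{-k}$ (in part (b), $L-1=R^{-\al}$). Testing \eqref{SNS} against $u\phi^2$ (interior regularity makes $u$ smooth enough) and subtracting the annulus mean $\bar p := \langle p\rangle_{A_R}$, permitted because $\div u = 0$ and $\supp \phi \subset B_{LR}$, produces
\begin{equation*}
\int_{B_R}|\nb u|^2 \;\le\; \int_{A_R}\Bigl(\tfrac12|u|^2|\De\phi^2| + \bigl(\tfrac12|u|^2 + |p-\bar p|\bigr)\,|u|\,|\nb\phi^2|\Bigr).
\end{equation*}
The target is to bound the right-hand side by a constant times the quantity whose $\liminf$ vanishes in \eqref{th1eq1} (resp.\ \eqref{th1eq2}), forcing $\nb u \equiv 0$ along a subsequence $R_j$. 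A nonzero constant solution $c$ is ruled out by either hypothesis, since $|c|^{3-\de}|A_R|^{(3-\de)/q}/R \sim |c|^{3-\de}(L-1)^{(3-\de)/q}R^{2-\de/2}$ is unbounded for every admissible $(\de,\al)$; hence $u \equiv 0$.

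The two integrals not involving $p$ yield to H\"older in $L^q(A_R)$ together with the Gagliardo--Nirenberg interpolation
\begin{equation*}
\|u\|_{L^3(A_R)}^3 \;\lec\; \|u\|_{L^q(A_R)}^{3-\de}\,\|\nb u\|_{L^2(A_R)}^{\de},
\end{equation*}
which holds at the precise exponent $q = 6(3-\de)/(6-\de)$ through $\|u\|_{L^6} \lec \|\nb u\|_{L^2}$ as the endpoint. Young's inequality with conjugate exponents $(2/\de,\,2/(2-\de))$ then absorbs $\|\nb u\|^2_{L^2(A_R)}$ into the left-hand side and leaves a remainder of order $((L-1)R)^{-2/(2-\de)}\|u\|_{L^q(A_R)}^{2(3-\de)/(2-\de)}$, which fits the shape of \eqref{th1eq1}/\eqref{th1eq2}. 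The critical and hardest step is the pressure integral, for which I aim to show
\begin{equation*}
\|p - \bar p\|_{L^{q/2}(A_R)} \;\lec\; C(L)\,\|u\|_{L^q(A_R)}^{2}
\end{equation*}
with the constant $C(L)$ tracked explicitly as $L \to 1^+$. The derivation inverts $\div$ via Bogovskii on $A_R$: solve $\div v = |p-\bar p|^{q/2-2}(p-\bar p) - c$ with $v|_{\partial A_R}=0$, pair against $\nb p = \De u - (u\cdot\nb)u$, and rearrange. Because Bogovskii's constant on an annulus degenerates as $L \to 1^+$, the essential technical move is to cover $A_R$ by $O((L-1)^{-1})$ overlapping star-shaped subdomains of uniformly bounded Lipschitz character after rescaling, apply Bogovskii piecewise, and glue via a partition of unity, yielding a polynomial loss $(L-1)^{-N}$ with explicit $N$.

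The formula for $\be(\de,\al)$ in part (b) is calibrated so that $(R^{\be}\|u\|_{L^q(A_R)})^{3-\de}$ dominates both the cubic-velocity and pressure contributions after all of the $(L-1)^{-1} = R^{\al}$ losses have been absorbed; the maximum in its definition reflects these two competing terms. For part (a) the annulus is fat, $C(L)$ is a harmless constant, and the same estimates collapse to the hypothesis of \eqref{th1eq1}. The main obstacle in both cases is the quantitative, $L$-uniform pressure oscillation estimate on thin annuli; everything downstream is careful exponent bookkeeping followed by passage to the subsequence realizing the $\liminf$ to conclude $\nb u \equiv 0$ on $\R^3$.
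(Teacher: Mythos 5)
Your high-level strategy matches the paper's (cutoff energy equality on a thin annulus, Bogovskii pressure bound with explicit $(L-1)$-dependence, Young's inequality to absorb $\|\nb(u\zeta)\|_2^2$, ruling out nonzero constants at the end), but there is a genuine gap at precisely the step you flag as the critical one: the pressure estimate.

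Your target inequality
\[
\|p-\bar p\|_{L^{q/2}(A_R)} \lesssim C(L)\,\|u\|_{L^q(A_R)}^{2}
\]
is not what the Bogovskii duality yields. Pairing against $\nb p=\De u-(u\cdot\nb)u$ gives, after rearrangement,
\[
\|p-\bar p\|_{L^{q/2}(A_R)}\ \lesssim\ C_{\Bog}(A_R)\Bigl(\|\nb u\|_{L^{q/2}(A_R)}+\|u\otimes u\|_{L^{q/2}(A_R)}\Bigr),
\]
and the term $\|\nb u\|_{L^{q/2}(A_R)}$ cannot be dropped, absorbed into the Caccioppoli left side (which lives in $L^2$, with $\zeta$ transitioning inside $A_R$), or bounded by $\|u\|_{L^q}^2$. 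The paper handles this with a separate ingredient you do not invoke: the \emph{pressure-free} local Stokes estimate (Lemma \ref{ST}, of Sverak--Tsai type), applied on a covering of the annulus by balls of radius $\sim(L-1)R$, which gives $\|\nb u\|_{L^{q/2}(A_R)}\lesssim (\si R)^{-1}\|u\|_{L^{q/2}(\hat A_R)}+\|u\otimes u\|_{L^{q/2}(\hat A_R)}$. This produces a \emph{linear-in-$u$} contribution to the pressure bound in addition to the quadratic one (see \eqref{eqHolder} in the paper), and the two competing exponents in the definition of $\be(\de,\al)$ are exactly the linear and quadratic pressure contributions after multiplying by $\|u\|_q^{1-\de}$ and applying Young. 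Your attribution of the $\max$ to ``cubic-velocity versus pressure'' is therefore off; the cubic velocity term $I_2$ produces the same exponent as the quadratic pressure term and is dominated by it when $\al>0$. Without Lemma \ref{ST} (or a Seregin--Wang style hole-filling iteration, which you also do not set up), the duality step leaves an uncontrolled $\nb u$ on the right.

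Two smaller points. First, your proposed Bogovskii construction --- covering $A_R$ by $O((L-1)^{-1})$ star-shaped subdomains and gluing by a partition of unity --- is vague on exactly the feature that matters: the resulting operator norm. A $3$D annulus of relative width $L-1$ requires $O((L-1)^{-2})$ balls of comparable inscribed radius, and partition-of-unity gluing of Bogovskii operators introduces compatibility corrections that generically degrade the constant well beyond $(L-1)^{-1}$. The paper instead builds a single Bogovskii map by a radial coordinate change (spherical coordinates mapping the thin annulus to the fixed annulus $B_2\setminus B_1$), which gives the sharp $C_q/(L-1)$ bound of Lemma \ref{Bog-annulus} directly; that sharpness is what produces the stated $\be(\de,\al)$. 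Second, your check that a nonzero constant $c$ violates the hypotheses uses the quantity $|c|^{3-\de}|A_R|^{(3-\de)/q}/R$ appropriate to part (a); for part (b) one must instead verify that $\be(\de,\al)+(3-\al)/q>0$ (which the paper does), and the exponent you wrote down, $2-\de/2-\al(1-\de/6)$, becomes negative for $\al$ large, so your argument as stated does not rule out constants for all admissible $\al\ge 0$.
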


Comments on Theorem \ref{th1}:
\begin{enumerate}
\item Part (a) is a borderline improvement of Seregin and Wang \cite{SW}, by allowing
$\ga=\frac13+\frac1q$ in \eqref{SW2}.
Note that $q$ is decreasing in $\de$ with lower bound $q(1)=12/5$, which is allowed in Theorem \ref{th1} but excluded in \eqref{SW2}.

\item
Our proof is different: \cite{SW} is based on a Caccioppoli type inequality for the nonlinear equation, while our proof is based on pressure-independent interior estimates of the Stokes system, see Lemma \ref{ST}.

\item
As in \eqref{LUWeq3}, \eqref{SW1} and \eqref{SW2}, conditions \eqref{th1eq1} and \eqref{th1eq2} use $\liminf$, not limit. 
We do not assume $u \in L^\infty(\Om)$ nor $\nb u \in L^2(\Om)$. The condition $u\in H^1_{\loc}$ implies $u \in C^\infty_\loc$.
Since we do not assume a global bound of $u$, we need to estimate the pressure locally.

\item Unlike \eqref{SW3}, the condition \eqref{th1eq1} for lower $q$ does not follow from itself for higher $q$ by H\"older inequality. For example, \eqref{th1eq1} for $\de=1$ and $q=12/5$ is
\EQ{%
\liminf_{R \to \infty} R^{-1/2} \norm{u}_{L^{\frac{12}5}(R<|x|<LR)}=0.
}
It does not follow from \eqref{th1eq1} for $\de=0$ and $q=3$
\EQ{\label{eq1.10}
\liminf_{R \to \infty} R^{-1/3} \norm{u}_{L^{3}(R<|x|<LR)}=0.
}

 Condition \eqref{eq1.10} implies that, for any nonzero $H^1_\loc$ solution $u$ in $\R^3$ and any $L>1$, there are $\e>0$ and $R_0\gg 1$ such that
\EQ{\label{th1eq3}
 \frac 1R \int_{ R<|x|<LR} |u|^3\, dx\ge \e,\quad \forall R>R_0.
}
We have similar lower bounds for other $q$ from \eqref{th1eq1}.
They are in the spirit of \eqref{LUWeq2}.

\item The main feature of part (b) is that the ratio of the outer and the inner radii is shrinking to 1 when $\al>0$. (It contains part (a) as a special case with $\al=0$.) To be able to prove it, we need explicit bounds of the \emph{Bogovskii map} in such annuli, see Lemma \ref{Bog-annulus}. For the exponent $\be(\de,\al)$
with $0 \le \de \le 1$ and $0 \le\al <\infty$,
 $\be(\de,\al)= \frac {-1+2\al}{3-\de}$ if and only if $5\al \de-24\al -3\de+6\ge 0$, in particular if $ \al  \le 3/19$.

\item When  $\be(\de,\al)= (\frac {3-\al}{q}-2+3\al)/(2-\de)$, one may get alternative conditions as in Theorem \ref{th4} (b), by not applying H\"older inequality to bound $\norm{u}_{q/2}$ by $\norm{u}_q$ in \eqref{eqHolder}.

\end{enumerate}

For the following three theorems,
we denote a point $x \in \Om$ as $x=(x',x_3)$ with $x'\in \R^2$.

\begin{theorem}[Half space] \label{thmB}
Let $\Om=\R^3_+=\{(x',x_3)\in \R^3:\, x_3>0\}$.
Suppose $u\in H^1_{\loc}(\overline {\Om})$ is a weak solution of \eqref{SNS} in $\Om$ with zero boundary condition.

(a) If for some constants $0  \le \de \le 1$ and $L>1$,
\EQ{\label{thmBeq1}
\liminf_{R \to \infty} \frac 1R \norm{u}^{3-\de}_{L^{q}(R<|x|<LR)}=0,
\quad q(\de)=\frac{3-\de}{1-\de/6},
}
then $u=0$.
 
(b) If for some constants $0  \le \de \le 1$ and $\al\ge0$,
\EQ{\label{thmBeq2}
\liminf_{R \to \infty} R^\be \norm{u}_{L^{q}(R<|x|<R+R^{1-\al})}=0,
\quad q(\de)=\frac{3-\de}{1-\de/6},
}
where $\be=\be(\de,\al)= \max \bket{\frac{\frac {3-\al}{q}-2+3\al}{2-\de},\ \frac {-1+2\al}{3-\de}}$, then $u=0$.

\end{theorem}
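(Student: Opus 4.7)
The plan is to mirror the proof of Theorem \ref{th1} on half-annuli $A_R^+ = \{x \in \R^3_+ : R < |x| < R'\}$, with $R' = LR$ for part (a) and $R' = R + R^{1-\alpha}$ for part (b), using the Dirichlet boundary condition $u|_{x_3=0} = 0$ to absorb contributions on the flat face. Concretely, I would pick a smooth cutoff $\phi$ equal to $1$ on a slightly shrunk half-annulus, supported away from the spherical portions of $\partial A_R^+$, and with no vanishing condition imposed on $\{x_3 = 0\}$, since $u$ already vanishes there. On this Lipschitz half-annulus, solve $\div w = u \cdot \nabla \phi$ with $w|_{\partial A_R^+} = 0$ via the Bogovskii construction; the zero-mean compatibility follows from $\div u = 0$ together with $u|_{x_3=0}=0$. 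The scaling estimate of Lemma \ref{Bog-annulus} should apply verbatim on the half-annulus because it is star-shaped with respect to a ball of the same thickness and diameter scale as in the full-annulus case.

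Testing \eqref{SNS} against the divergence-free field $\phi u - w$ eliminates the pressure and produces the same Caccioppoli-type energy identity as in the whole-space proof. All the boundary terms from integration by parts on $\{x_3=0\}$ vanish because $u = w = 0$ there, so no extra contribution arises. The pressure-independent local Stokes estimate used in Theorem \ref{th1} (Lemma \ref{ST}) must be replaced by its boundary analogue up to a flat Dirichlet face, which is classical Stokes theory and provides the same interior-type bound with the same scaling. Combining the Bogovskii bound, the boundary Stokes estimate and H\"older's inequality reproduces, in case (a), an estimate of the form
\[
\int_{A_R^+} |\nabla u|^2 \phi^2 \;\lec\; R^{-1}\,\|u\|_{L^q(A_R^+)}^{3-\delta},
\]
and in case (b) the corresponding thin-annulus version with exponent $\beta(\delta,\alpha)$ identical to that in Theorem \ref{th1}. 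Taking the $\liminf$ along the sequence supplied by \eqref{thmBeq1} or \eqref{thmBeq2} sends $\|\nabla u\|_{L^2(\R^3_+)}$ to zero, and the Dirichlet boundary condition then upgrades this to $u \equiv 0$.

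The main obstacle is technical rather than conceptual: one must verify that Lemma \ref{Bog-annulus} retains its stated $R$-dependence on the half-annulus, and that Lemma \ref{ST} admits a flat-boundary variant with zero trace preserving the same interior scaling. The Bogovskii point is a geometric check that the star-shaped decomposition underlying Lemma \ref{Bog-annulus} survives intersection with a half-space, which for a thin half-annulus of thickness $R^{1-\alpha}$ requires a covering by bounded-overlap star-shaped pieces of the right size; alternatively, an odd reflection of $u$ in $x_3$ (which is admissible because $u|_{x_3=0}=0$) reduces the analysis directly to the full-annulus case already handled for Theorem \ref{th1}.
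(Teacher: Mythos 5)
Your main route coincides with the paper's: extend the whole-space argument to half-annuli by (i) a Bogovskii map on the half-annulus (the paper's Lemma \ref{Bog-annulus} already covers $A=B_{LR}^+\setminus \overline B_R^+$), (ii) a pressure-independent local Stokes estimate up to a flat Dirichlet boundary (the paper invokes Kang's \cite{Kang} result, Lemma \ref{Kang}, and assembles the annular version as Lemma \ref{int-halfannulus}), and (iii) the Caccioppoli/local-energy argument. Testing with the divergence-free correction $\phi u - w$ rather than testing with $u\phi$ and estimating $\|p-(p)_E\|_{L^{q/2}}$ via Lemma \ref{p-est} are two presentations of the same Bogovskii-based bound, so that is merely cosmetic.

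Two of your side justifications are, however, incorrect, and the first would fail if relied upon. The odd-reflection shortcut does not reduce the boundary case to the interior case. Literal odd reflection of all three velocity components destroys $\div u=0$ across $\{x_3=0\}$. The reflection that does preserve incompressibility under the no-slip condition (odd tangential, even normal component) forces an odd reflection of the pressure to keep the momentum equations consistent away from the boundary; but $p(x',0)$ is generically nonzero, so the odd-reflected pressure jumps across $\{x_3=0\}$ and $\pd_3\bar p$ acquires a Dirac layer $2p(x',0)\delta_{\{x_3=0\}}$ with no compensating singular term in $\De\bar u_3$. Hence the extended pair does not solve the Stokes system in a full annulus, and the whole-space interior estimate (Lemma \ref{ST}) cannot be applied to it. This is exactly why a genuine boundary estimate such as Kang's is needed. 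Second, the star-shapedness reasoning for the Bogovskii constant is imprecise: a thin half-annulus of thickness $\si R$ is not star-shaped with respect to any single ball, and while it can be written as a union of bounded-overlap star-shaped pieces, extracting the sharp dependence $\sim (L-1)^{-1}$ from the generic union-of-star-shaped-domains bound is not automatic. The paper sidesteps this by constructing $\Bog$ explicitly via a change of the radial variable, which is what yields the clean $L$- and $R$-dependence in Lemma \ref{Bog-annulus} and in turn makes part (b), with the shrinking ratio $L=1+R^{-\alpha}$, work.
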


Comments on Theorem \ref{thmB}:
\begin{enumerate}
\item The statement and proof of Theorem \ref{thmB} are similar to those of Theorem \ref{th1}, but we also need to estimate $\nb u$ and $p$ on the boundary without pressure assumption. 
\item
Conditions \eqref{thmBeq1} and \eqref{thmBeq2} use $\liminf$, not limit. 
We do not assume $u \in L^\infty(\Om)$ nor $\nb u \in L^2(\Om)$. The condition $u\in H^1_{\loc}(\overline \R^3_+)$ implies $u \in C^\infty_\loc(\overline \R^3_+)$, see \cite{Kang}.

\end{enumerate}

\begin{theorem}[Periodic slab] \label{thmC}
Let $\Om = \R^2 \times (\R/\ZZ)$. We denote a point $x \in \Om$ as $x=(x',x_3)$ with $(x',0)=(x',1)$.
Suppose $u\in H^1_{\loc}(\Om)$ is a weak solution of \eqref{SNS} in $\Om$.

(a) If for some constants $0  \le \de \le 1$ and $L>1$,
\EQ{\label{thmCeq1}
\liminf_{R \to \infty} \frac 1R \norm{u}^{3-\de}_{L^{q}(R<|x'|<LR)}=0,
\quad q(\de)=\frac{3-\de}{1-\de/6},
}
then $u=0$.
 
(b) If for some constants $0  \le \de \le 1$ and $\al\ge0$,
\EQ{\label{thmCeq2}
\liminf_{R \to \infty} R^{\tilde\be} \norm{u}_{L^{q}(R<|x'|<R+R^{1-\al})}=0,
\quad q(\de)=\frac{3-\de}{1-\de/6},
}
where $\tilde\be= \be_{ps}(\de,\al)= \max \bket{\frac{\frac {2-\al}{q}-2+3\al}{2-\de},\ \frac {-1+2\al}{3-\de}}$, then $u=0$.
\end{theorem}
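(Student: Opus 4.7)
The proof I propose largely parallels that of Theorem \ref{th1}, with the cylindrical annulus
\[
A_R = \bket{(x',x_3)\in \Om: R<|x'|<R+R^{1-\al}}
\]
replacing the spherical annulus from Theorem \ref{th1}(b). The only essential geometric change is the volume $|A_R| \simeq R\cdot R^{1-\al}\cdot 1 = R^{2-\al}$ in place of $R^{3-\al}$, and this single substitution converts the first entry $\frac{(3-\al)/q - 2+3\al}{2-\de}$ of $\be(\de,\al)$ into the first entry $\frac{(2-\al)/q - 2+3\al}{2-\de}$ of $\tilde\be=\be_{ps}(\de,\al)$. The Bogovskii-driven second entry $\frac{-1+2\al}{3-\de}$ is unchanged because the governing scale is the radial thickness $R^{1-\al}$; the periodic factor $\R/\ZZ$ is of order one and enters trivially.

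Concretely, I would choose a cutoff $\phi(x')$ depending only on $x'$, supported in a slight enlargement of $\{R<|x'|<R+R^{1-\al}\}$, equal to $1$ on a slight shrinking, with $|\nb\phi| \lec R^{\al-1}$. Since $\phi$ is independent of $x_3$ and $\Om$ is periodic in $x_3$, all integrations by parts in $x_3$ are boundary-free. Testing \eqref{SNS} against $\phi^2 u$ and using $\div u = 0$ yields the standard Caccioppoli identity
\begin{equation*}
\int_\Om |\nb u|^2 \phi^2 \,dx = -2\int_\Om \phi\,(\nb u\cdot\nb\phi)\cdot u\,dx + \int_\Om |u|^2 (u\cdot\nb\phi)\phi\,dx + 2\int_\Om (p-\bar p)(u\cdot\nb\phi)\phi\,dx,
\end{equation*}
in which $\bar p$ is any constant since $\int \phi\, u\cdot\nb\phi\,dx = 0$.

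For the pressure term I would apply a periodic-slab version of Lemma \ref{Bog-annulus} to the cylindrical annulus, obtaining an $L^s(A_R)$ bound on $p-\bar p$ with Bogovskii constants carrying the thickness $R^{1-\al}$ explicitly; combined with the pressure-independent Stokes interior estimate of Lemma \ref{ST}, this controls $\norm{p-\bar p}_{L^s(A_R)}$ bilinearly by $u$ and $\nb u$ with explicit $R$-weights. Substituting into the identity and applying H\"older with $|A_R|\simeq R^{2-\al}$ produces a bound of the form
\begin{equation*}
\int_\Om |\nb u|^2 \phi^2\,dx \lec R^{(3-\de)\tilde\be}\norm{u}_{L^q(A_R)}^{3-\de} + \text{(absorbable or same-type terms)},
\end{equation*}
with $\tilde\be$ realized as the maximum over the two regimes (volume-H\"older versus Bogovskii-dominated) appearing in the statement. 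Passing $R\to\infty$ along the liminf subsequence from \eqref{thmCeq2} forces $\nb u \equiv 0$ on every compact subset of $\Om$, so $u$ is a constant vector; that constant is ruled out by plugging $u\equiv c$ back into \eqref{thmCeq2} and noting $\norm{c}_{L^q(A_R)} \simeq |c|\,R^{(2-\al)/q}$, whose $R$-power paired with $R^{\tilde\be}$ forces $c=0$.

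The main obstacle is twofold. First, upgrading Lemma \ref{Bog-annulus} to the periodic-slab cylindrical annulus with the correct $R$-dependence in the radial-thickness $R^{1-\al}$, which is where the second entry of $\tilde\be$ is produced and where the order-one periodic direction must be shown not to spoil the scaling. Second, carefully bookkeeping the powers of $R$ across the three terms of the Caccioppoli identity, after the volume has dropped from $R^{3-\al}$ to $R^{2-\al}$, so that both entries of $\max\bket{\frac{(2-\al)/q-2+3\al}{2-\de},\ \frac{-1+2\al}{3-\de}}$ emerge in the right places and the first entry reflects the slab volume change while the second remains identical to Theorem \ref{th1}(b).
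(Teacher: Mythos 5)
Your high-level structure matches the paper's Section~5: a cutoff $\zeta$ depending only on $x'$, the local energy equality \eqref{LEE2}--\eqref{LEE3}, a Bogovskii-type pressure estimate on a cylindrical annulus, the volume drop from $R^{3-\al}$ to $R^{2-\al}$ (equivalently $|\Om_R| = C\si R^2$ versus $C\si R^3$), and the substitution that converts the first entry of $\be$ into $\frac{(2-\al)/q - 2 + 3\al}{2-\de}$. Your form of the Caccioppoli identity is correct, and the endgame (show $\nb u\equiv 0$, then rule out the nonzero constant via the $R^{\tilde\be + (2-\al)/q}$ power) is exactly the paper's.

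The genuine gap is that you flag the Bogovskii estimate on the periodic slab as the ``main obstacle'' but leave it unresolved, and the short remark that ``the periodic factor $\R/\ZZ$ is of order one and enters trivially'' is the opposite of what actually makes this work. A Bogovskii map on the set you have in mind, $\bke{B'_{LR}\setminus B'_R}\times (0,1)$, has a constant that blows up with $R$ when the radial thickness $\si R = \si_0 R^{1-\al}$ is far from $1$ (the region is highly anisotropic for either $\al<1$ or $\al>1$). The paper circumvents this in two coupled steps: Lemma~\ref{Bog-cylinder} constructs a Bogovskii map on the \emph{tall} cylinder $E_R = A_R\times(0,R)$ (height $R$, not $1$), where the geometry is scale-invariant and the constant is $\frac{C_q}{L-1}\sim \frac{1}{\si}$, depending only on the dimensionless ratio $\si$, not on $\si R$; this goes into Lemma~\ref{int-cyl}, the cylindrical analogue of Lemma~\ref{int-annulus}. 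Then the estimate on $E_R$ is transferred down to the period-1 slab by exploiting $x_3$-periodicity: both sides of the $L^{q/2}$ bound pick up a factor $R$ when integrated over height $R$, and these cancel, giving \eqref{p-ARper-est} on $A_R\times(0,1)$. This ``stack $\sim R$ periods, divide by $R$'' step is precisely where the periodicity enters nontrivially, and your claim that the Bogovskii constant's ``governing scale is the radial thickness $R^{1-\al}$'' is off: the constant is governed by $\si\sim R^{-\al}$, a pure aspect ratio. Supplying Lemma~\ref{Bog-cylinder} (a cylindrical-coordinates analogue of Lemma~\ref{Bog-annulus}) and the periodic-stacking reduction would close the gap.
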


Comments on Theorem \ref{thmC}:
\begin{enumerate}
\item In Theorem \ref{thmC}, conditions \eqref{thmCeq1} and \eqref{thmCeq2} use $\liminf$, not limit.  We do not assume $u \in L^\infty(\Om)$ nor $\nb u \in L^2(\Om)$. The condition $u\in H^1_{\loc}(\Om)$ implies $u \in C^\infty_\loc(\Om)$.

\item Note $\be_{ps}(\de,\al)$ differs from $\be(\de,\al)$ in Theorems \ref{th1} and \ref{thmB} in that the numerator $3-\al$ is replaced by $2-\al$.

\end{enumerate}

\begin{theorem}[Zero BC slab] \label{th4}
Let $\Om = \R^2 \times (0,1)$. 
Suppose $u\in H^1_{\loc}(\overline \Om)$ is a weak solution of \eqref{SNS} in $\Om$ with zero boundary condition $u(x',0)=u(x',1)=0$.

(a) If for some constants $0  \le \de \le 1$ and $L>1$,
\EQ{\label{th4eq1}
\liminf_{R \to \infty} R^{2/q} \norm{ u}^{2-\de}_{L^{q}(R<|x'|<LR)}\to 0,
}
where $q=q(\de)=\frac{3-\de}{1-\de/6}$,
 then $u=0$.

(b) If for some constants $6/5  \le r \le 2$ and $L>1$,
\EQ{\label{th4eq1b}
\liminf_{R \to \infty} \int_0^1\int_{ R<|x'|<LR} \bke{|u|^r+ |u|^{2r}}\, dx'\,dx_3=0,
}
 then $u=0$.
\end{theorem}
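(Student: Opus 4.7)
The plan is to adapt the argument of Theorems \ref{th1}--\ref{thmC} to the slab with zero Dirichlet boundary at $x_3=0,1$, exploiting the uniform Poincar\'e inequality $\norm{u}_{L^s(S_R)}\le C\norm{\pd_3 u}_{L^s(S_R)}$ (for any $s\ge 1$, with $C$ independent of $R$) available because $u$ vanishes on the top and bottom. Throughout I set $S_R=\{R<|x'|<LR\}\times(0,1)$. Take a smooth radial cutoff $\phi_R(|x'|)$ supported in $\{R<|x'|<LR\}$ with $|\nb\phi_R|\le C/R$ and test \eqref{SNS} against $u\phi_R^2$. Since $u=0$ at $x_3=0,1$, all boundary contributions drop out and one obtains the Caccioppoli-type estimate
\EQN{
\int_\Om|\nb u|^2\phi_R^2 \le C\int_{S_R}\bke{R^{-2}|u|^2+R^{-1}|u|^3+R^{-1}|u|\,|p-c_R|}
}
for any constant $c_R$. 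The $R^{-2}|u|^2$ term is handled by Poincar\'e; the pressure term is controlled by choosing $c_R$ via Lemma \ref{ST} together with the annular slab Bogovskii estimate of Lemma \ref{Bog-annulus}, which bounds $\norm{p-c_R}_{L^s(S_R)}$ by local $L^{2s}$-norms of $u$ on a slightly enlarged region $S_R^*$.

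For part (a), I bound $\int_{S_R}|u|^3$ by a H\"older product involving $\norm{u}_{L^{q/2}(S_R)}$, and use H\"older on $|S_R|\sim R^2$ to get $\norm{u}_{L^{q/2}(S_R)}\le CR^{2/q}\norm{u}_{L^q(S_R)}$. The remaining factor is a norm $\norm{u}_{L^{r}(S_R^*)}$ with $r\in[2,6]$, and the zero-BC Sobolev--Poincar\'e embedding $\norm{u}_{L^r(S_R)}\lec\norm{\nb u}_{L^2(S_R^*)}$ (uniform in $R$, proved by multiplying by a cutoff in $x'$ and extending by zero in $x_3$) converts it into $\norm{\nb u}_{L^2}$. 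Treat the pressure integral similarly. Young's inequality then absorbs all $\norm{\nb u}_{L^2}$ factors into the left-hand side via a hole-filling iteration across nested dyadic annuli. What remains is exactly the condition $R^{2/q}\norm{u}_{L^q(S_R)}^{2-\de}\to 0$, so letting $R\to\infty$ along the liminf subsequence gives $\int_\Om|\nb u|^2=0$, hence $u=0$ by the zero BC.

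For part (b) the hypothesis $\int_{S_R}(|u|^r+|u|^{2r})\to 0$ already supplies the smallness of $\norm{u}_{L^r(S_R)}$ and $\norm{u}_{L^{2r}(S_R)}$ directly. One uses $\norm{p-c_R}_{L^r(S_R)}\lec \norm{u}_{L^{2r}(S_R^*)}^2$ and the H\"older bound $\int_{S_R}|u|^3\lec \norm{u}_{L^{2r}(S_R)}^2\norm{u}_{L^{r/(r-1)}(S_R^*)}$; since $r/(r-1)\in[2,6]$ for $r\in[6/5,2]$, the zero-BC Sobolev--Poincar\'e bound applies to $\norm{u}_{L^{r/(r-1)}}$ and yields $\norm{\nb u}_{L^2(S_R^*)}$, which is then absorbed by Young as before.

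The principal obstacle is the pressure estimate on the shallow annulus $S_R$, whose horizontal diameter $\sim R$ is much larger than its vertical thickness $1$: Bogovskii bounds in this geometry are $R$-dependent, and Lemma \ref{Bog-annulus} must be applied with care to obtain sharp estimates. The $R^{2/q}$ prefactor in part (a) is exactly the price of the H\"older step $\norm{u}_{L^{q/2}}\le CR^{2/q}\norm{u}_{L^q}$; avoiding this step leads to the formulation in part (b), as noted in Remark (6) after Theorem \ref{th1}.
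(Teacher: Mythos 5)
Your proposal misidentifies the key technical input. You invoke Lemma \ref{Bog-annulus} to obtain the pressure estimate on $S_R = \{R<|x'|<LR\}\times(0,1)$, but Lemma \ref{Bog-annulus} is proved only for the spherical annulus $B_{LR}\setminus\overline{B_R}$ (or half-annulus) in $\R^3$, whose shape is scale-invariant. The slab region $A_R\times(0,1)$, with $A_R$ a 2D annulus of radius $\sim R$ and unit height, is a different geometry, and no rescaling of Lemma \ref{Bog-annulus} yields a Bogovskii map there. The paper constructs a separate map, Lemma \ref{CPZZ}, tailored to exactly this domain, and its constant grows linearly in $R$: $\norm{\nb v}_{L^q(E)}\le C_{q,L}R\norm{f}_{L^q(E)}$. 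That factor of $R$ is the crux of the whole section. Fed through Lemma \ref{p-est}, it gives $\norm{p-p_R}_{L^r(E_R)}\lec R\bke{\norm{u}_{L^r(\Om_R)}+\norm{u}^2_{L^{2r}(\Om_R)}}$ as in \eqref{th4-eq5}, and this $R$ cancels precisely against the $R^{-1}$ coming from $|\nb\zeta|$; the resulting $R^{2/q}$ in \eqref{th4eq1} is then purely the H\"older step $\norm{u}_{L^{q/2}}\lec R^{2/q}\norm{u}_{L^q}$, as you guessed, but only because the Bogovskii $R$ has already been cancelled. Your closing remark that ``Lemma \ref{Bog-annulus} must be applied with care'' does not bridge this; the correct tool is a different lemma, and its $R$-growth is not an obstruction to be managed but the exact quantity that makes the bookkeeping close.

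A second, related gap: the local Stokes estimates in the slab cannot be obtained from Lemma \ref{ST} by scaled balls, since balls in $\Om=\R^2\times(0,1)$ have radius at most $\sim 1$ regardless of $R$. The paper covers $A_R\times(0,1)$ by $\sim R^2$ unit-size half-balls anchored at $x_3=0$ and $x_3=1$, applies the boundary estimate Lemma \ref{Kang} on each, and sums (see \eqref{th4-eq3}--\eqref{th4-er4}); the choice of covering is not by scaling, unlike Theorems \ref{th1}--\ref{thmC}. Your hole-filling over dyadic annuli and the auxiliary Poincar\'e/Sobolev embeddings are not used in the paper's argument, which finishes with a single Young absorption at fixed $R$. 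With the correct Bogovskii constant and Stokes covering in place, your outline of the H\"older step for part (a) and the exponent range $r\le s\le 2r$ for part (b) does match the paper's.
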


Comments on Theorem \ref{th4}:
\begin{enumerate}
\item In Theorem \ref{th4} the conditions \eqref{th4eq1} and \eqref{th4eq1b} are $\liminf$, not limit.  We do not assume $u \in L^\infty(\Om)$ nor $\nb u \in L^2(\Om)$. The condition $u\in H^1_{\loc}(\overline \Om)$ implies $u \in C^\infty_\loc(\overline\Om)$;  see \cite{Kang}.

\item  Its proof is different from those for Theorems \ref{th1}-\ref{thmC} as we cannot obtain the local pressure estimate by scaling, and we get an additional $R$ factor. As a result, we have a positive exponent for $R$ in \eqref{th4eq1}. Moreover, we cannot vary the radii ratio $L$.
 
\item A D-solution satisfying \eqref{D.soln} in a slab with zero BC is shown to be zero by \cite[Theorem 1.1]{CPZZ}.   It is extended by Theorem \ref{th4} since \eqref{D.soln} implies \eqref{th4eq1b}: 
By Poincar\'e inequality in $x_3$ direction and zero BC, for $A_R=\{x'\in \R^2: R<|x'|<LR\}$,
\[
\int_0^1 \int _{A_R} |u|^2 dx \le C\int_0^1 \int _{A_R} |\pd_{x_3} u|^2 dx %
\]
which vanishes 
as $R \to \infty$ by \eqref{D.soln}. By regularity theory,  \eqref{D.soln} and zero BC imply $u \in L^\infty(\Om)$.  We have $\int_0^1 \int _{A_R} |u|^4 dx \le \norm{u}_{L^\infty(\Om)}^2 \int_0^1 \int _{A_R} |u|^2 dx=o(1)$.

\item It is possible to prove $u=0$ assuming $\liminf_{R\to \infty}\int_0^1\int_{A_R} (|u|^r + |u|^s)=0$ with $s>2r$, by modifying the proof of part (b). We skip it to keep the presentation simple.
\end{enumerate}

The key to the above theorems is the estimate of the pressure 
\[
\inf_{c\in \R} \norm{p-c}_{L^q(E)}
\]
in an annulus-like region $E$, based on integral bounds of $u$ in a slightly larger region.
Here $E = B_{LR} \setminus B_R$ for $\Om=\R^3$, 
$E = B_{LR}^+ \setminus B_R^+$  for $\Om=\R^3_+$,
$E = (B_{LR}' \setminus B_R')\times (0,R)$ for a periodic slab, and
$E = (B_{LR}' \setminus B_R')\times (0,1)$ for a zero BC slab. 
Here  $B_R$ is the ball in $\R^3$ of radius $R$ centered at the origin,  $B_R^+= B_R\cap \R^3_+$,
while $B_R'$ is a ball in $\R^2$.
These estimates are based on Lemma \ref{p-est} and the estimates of the corresponding Bogovskii maps, Lemmas \ref{Bog-annulus}, \ref{Bog-cylinder} and \ref{CPZZ}. After we prove these lemmas in \S\ref{S2}, we will prove Theorem \ref{th1} in \S\ref{S3}, Theorem \ref{thmB} in \S\ref{S4}, Theorem \ref{thmC} in \S\ref{S5}, and Theorem \ref{th4} in \S\ref{S6}.

\section{Bogovskii map and pressure estimate}
\label{S2}

We first recall the Bogovskii map (see \cite[Lemma III.3.1]{Galdi} and \cite[\S2.8]{Tsai-book}). For a domain $E \subset \R^n$, denote
\[
L^q_0(E)=\{ f \in L^q(E):\ \textstyle {\int_E} f =0\}.
\]

\begin{lemma}\label{Bogovskii} 
Let $E$ be a bounded Lipschitz domain in $\R^n$, $2 \le n \in \NN$. Let $1<q<\infty$. There is a linear map
\[
\Bog: L^q_0(E) \to W^{1,q}_0(E;\R^n),
\]
such that for any $f\in L^q_0(E)$, $ v = \Bog f$ is a vector field that satisfies
\[
v\in W^{1,q}_0(E)^n,\quad
\div v = f, \quad 
\norm{\nb v}_{L^q(E)} \le \CB(E,q) \norm{f}_{L^q(E)},
\]
where the constant $\CB$ does not depend on $f$.
If $RE = \{ Rx: x\in E\}$, then $\CB(RE,q)=\CB(E,q)$.
\end{lemma}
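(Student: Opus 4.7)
The plan is to follow Bogovskii's original construction, as detailed in \cite[Lemma III.3.1]{Galdi}. First, I would treat the case where $E$ is star-shaped with respect to some ball $B\subset E$. Fix a nonnegative $\om \in C_c^\infty(B)$ with $\int_B \om=1$, and for $f\in L^q_0(E)$ define
\[
v(x)=\int_E K(x,y)\,f(y)\,dy, \quad K(x,y)=\frac{x-y}{|x-y|^n}\int_{|x-y|}^{\infty} \om\bke{y+s\,\tfrac{x-y}{|x-y|}}\,s^{n-1}\,ds.
\]
A direct computation using $\int_E f=0$ shows the spurious constant terms drop out, yielding $\div v=f$ and $v\in W^{1,q}_0(E;\R^n)$. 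The core analytic input is the gradient bound: the derivatives $\pd_j K_i(x,y)$ decompose into a Calder\'on--Zygmund singular kernel (homogeneous of degree $-n$ in $x-y$ with mean-zero angular factor) plus a uniformly bounded remainder, so the classical $L^q$ theory, $1<q<\infty$, yields $\norm{\nb v}_{L^q(E)}\le C(E,q)\norm{f}_{L^q(E)}$.

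Second, I would pass to a general bounded Lipschitz $E$ by decomposition. Such a domain admits a finite cover $E=\bigcup_{i=1}^N E_i$ with each $E_i$ star-shaped with respect to a ball. Using a partition of unity subordinate to the cover together with the connectedness of $E$, split $f=\sum_{i=1}^N f_i$ with $\supp f_i \subset \overline{E_i}$ and $\int_{E_i}f_i=0$; the zero-mean normalization is arranged by transferring constants through overlapping patches, which is possible because $\int_E f=0$. Apply the star-shape case to each $f_i$, extend $v_i=\Bog_{E_i}f_i$ by zero to all of $E$, and set $v=\sum v_i$. Linearity and the norm bound are preserved, so we obtain the claimed operator $\Bog$.

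For the scaling invariance, given $g\in L^q_0(RE)$ set $f(x)=g(Rx)$, so that $f\in L^q_0(E)$ with $\norm{f}_{L^q(E)}=R^{-n/q}\norm{g}_{L^q(RE)}$. If $v=\Bog_E f$ and $v'(y)=R\,v(y/R)$, then $\div_y v'=g$ and $\nb_y v'(y)=\nb_x v(y/R)$, so
\[
\norm{\nb v'}_{L^q(RE)}=R^{n/q}\norm{\nb v}_{L^q(E)}\le R^{n/q}\CB(E,q)\norm{f}_{L^q(E)}=\CB(E,q)\norm{g}_{L^q(RE)}.
\]
Thus $\CB(RE,q)\le \CB(E,q)$, and equality follows by swapping the roles of $E$ and $RE$ via $R\mapsto 1/R$. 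The principal obstacle is the Calder\'on--Zygmund estimate in the star-shape step: one must carefully isolate the singular part of $\nb K$ from a pointwise-bounded remainder and verify the angular-cancellation needed to invoke the standard singular-integral $L^q$ theory. The remaining ingredients---the partition-of-unity mean-zero splitting and the change of variables for scaling---are essentially bookkeeping.
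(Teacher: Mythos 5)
Your proposal is correct, and for the part the paper actually argues---the scale-invariance $\CB(RE,q)=\CB(E,q)$---your approach is exactly the paper's: given $\Bog$ on $E$ and $\bar f\in L^q_0(RE)$, set $f(x)=\bar f(Rx)$, apply $\Bog$, and return $\bar v(y)=Rv(y/R)$. Your norm bookkeeping ($\|f\|_{L^q(E)}=R^{-n/q}\|\bar f\|_{L^q(RE)}$, $\nb_y\bar v(y)=(\nb v)(y/R)$, hence $\|\nb\bar v\|_{L^q(RE)}=R^{n/q}\|\nb v\|_{L^q(E)}$) fills in what the paper leaves implicit, and the $R\mapsto 1/R$ symmetry closes the equality. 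For the existence and $L^q$-boundedness the paper simply cites \cite[Lemma~III.3.1]{Galdi} and \cite[\S 2.8]{Tsai-book}, whereas you sketch the actual construction: the explicit kernel on domains star-shaped with respect to a ball, the Calder\'on--Zygmund estimate for $\nb K$, and the decomposition of a bounded Lipschitz domain into finitely many such star-shaped pieces with a mean-zero splitting of $f$. That sketch is faithful to the cited sources; the two genuinely delicate points you flag (isolating the CZ singular part of $\nb K$ with the requisite angular cancellation, and arranging $\int_{E_i}f_i=0$ by transferring constants through overlaps using $\int_E f=0$) are indeed where all the work lies, and they are carried out in Galdi's Lemmas III.3.1--III.3.2. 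One small precision worth making: since the paper notes $\CB$ depends on the \emph{choice} of $\Bog$, the clean way to state the equality is that if one \emph{defines} $\Bog_R$ on $RE$ as the rescaled operator, then the operator norms are literally equal (the scaling is an isometry on the relevant quotients); your ``$\le$ then swap $R\mapsto 1/R$'' version is the right argument if $\CB$ is taken to be the optimal constant.
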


This map is non-unique and we usually fix a choice that almost minimizes the constant $\CB$. Strictly speaking $\CB$ depends on this choice. The last statement $\CB(RE,q)=\CB(E,q)$ is because for given $\Bog$ defined on $E$, we can define $\Bog_R$ on $RE$ as follows: For $\bar f \in L^q_0(RE)$, let $f(x) = \bar f(Rx)$ for $x\in E$, $v=\Bog f \in W^{1,q}_0(E)$, and 
$\bar v = \Bog_R \bar f$ is given by $\bar v(y) = Rv(R^{-1} y)$.

\medskip

The constant $\CB$ appears in the following pressure estimate.

\begin{lemma} \label{p-est} 
Let $E$ be a bounded Lipschitz domain in $\R^n$.  Let $p \in L^q(E)$, 
$1<q<\infty$. Then 
\EQ{\label{pest-eq1}
 \norm{p - (p)_{E}}_{L^q(E)} \le C_0 \sup_{ \zeta \in W^{1,q'}_0(E),\ \norm{\nb \zeta}_{L^{q'}(E)}=1} \int p \div \zeta
}
where $(p)_E = \frac 1{|E|}\int _E p$ and $C_0 = 2 \CB(E,q')$.
\end{lemma}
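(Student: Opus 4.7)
The plan is to prove Lemma \ref{p-est} by combining $L^q$--$L^{q'}$ duality with the Bogovskii map supplied by Lemma \ref{Bogovskii}. The mean-value subtraction $p - (p)_E$ exactly compensates for the fact that $\div \zeta$ is always mean-zero for $\zeta \in W^{1,q'}_0(E)$, so Bogovskii produces precisely the test functions needed to realize the dual norm.

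First I would express the left-hand side via duality as
$$\|p - (p)_E\|_{L^q(E)} = \sup_{g \in L^{q'}(E),\ \|g\|_{q'} \le 1} \int_E (p - (p)_E)\, g\, dx.$$
Since $\int_E (p - (p)_E)\, dx = 0$, constants can be subtracted from the test function: setting $\tilde g = g - (g)_E$, I would observe that $\int_E (p - (p)_E) g = \int_E (p - (p)_E)\tilde g$ and $\|\tilde g\|_{L^{q'}(E)} \le 2 \|g\|_{L^{q'}(E)} \le 2$ by the triangle inequality together with H\"older. Thus $\tilde g \in L^{q'}_0(E)$, which is the domain of $\Bog$.

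Next, applying Lemma \ref{Bogovskii} with exponent $q'$, I would pick $\zeta = \Bog \tilde g \in W^{1,q'}_0(E;\R^n)$ with $\div \zeta = \tilde g$ and
$$\|\nabla \zeta\|_{L^{q'}(E)} \le \CB(E,q')\, \|\tilde g\|_{L^{q'}(E)} \le 2\,\CB(E,q').$$
Because $\zeta$ has zero trace on $\partial E$, I have $\int_E \div \zeta = 0$, so $\int_E (p)_E \div\zeta = 0$ and therefore
$$\int_E (p - (p)_E)\, g\, dx = \int_E (p - (p)_E) \div \zeta\, dx = \int_E p \div \zeta\, dx.$$
Normalizing by $\|\nabla \zeta\|_{q'}$, i.e.\ replacing $\zeta$ by $\zeta/\|\nabla\zeta\|_{q'}$ inside the supremum on the right of \eqref{pest-eq1}, gives
$$\int_E (p-(p)_E)\, g\, dx \le 2\,\CB(E,q') \sup_{\zeta \in W^{1,q'}_0(E),\ \|\nabla\zeta\|_{q'}=1} \int_E p \div \zeta\, dx.$$
Taking the supremum over admissible $g$ yields \eqref{pest-eq1} with $C_0 = 2\CB(E,q')$.

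There is no real obstacle here; the only bookkeeping point is the factor of $2$, which is inherited from the reduction to mean-zero test functions and is absorbed into $C_0$. One could conceivably try to avoid the factor $2$ by testing directly against mean-zero $g$ in the duality statement, but since the dual norm of a mean-zero function against mean-zero test functions only equals the $L^q$ norm up to a constant, the factor $2$ is essentially sharp for this elementary argument and is harmless for the applications that follow.
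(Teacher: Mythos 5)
Your proposal is correct and takes essentially the same route as the paper: both reduce to mean-zero data via the subtraction that costs a factor $2$, invoke Lemma \ref{Bogovskii} with exponent $q'$ to produce the test field $\zeta$, and use that $\zeta$ has zero trace so $\int_E \div\zeta = 0$ allows the $(p)_E$ term to drop out. The only cosmetic difference is that the paper plugs in the explicit $L^q$--$L^{q'}$ duality extremizer $g = |p|^{q-2}p - (|p|^{q-2}p)_E$ directly (after normalizing $(p)_E=0$), whereas you keep the abstract duality supremum over $\|g\|_{q'}\le 1$ and pass to $\tilde g = g-(g)_E$; this yields the same constant $C_0 = 2\CB(E,q')$ by the same mechanism.
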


Eq.~\eqref{pest-eq1} %
is used in \cite{Sverak-Tsai} to prove Lemma \ref{ST} below.
Its proof follows that of \cite[Lemma IV.1.1]{Galdi} although stated differently, and is given here for completeness and to specify the constant.
\begin{proof}
We may replace $p$ by $p-c$ in \eqref{pest-eq1} and hence we may assume $(p)_E=0$. Let $g=|p|^{q-2}p - (|p|^{q-2}p)_E$. Then
\[
\int_Eg=0, \quad \norm{g}_{L^{q'}(E)} \le 2 \norm{p}_{L^q(E)}^{q-1}.
\]
By Lemma \ref{Bogovskii}, there is a solution $w \in W^{1,q'}_0(E)^n$ of
\[
\div w = g, \quad \norm{\nb w}_{L^{q'}(E)} \le C_1\norm{g}_{L^{q'}(E)},
\]
where $C_1=\CB(E,q')$.
Denote $N=\sup_{ \zeta \in W^{1,q'}_0(E),\ \norm{\nb \zeta}_{L^{q'}(E)}=1} \int p \div \zeta$. 
Using $(p)_E=0$,
\[
\int_E |p|^q = \int_E pg = \int_E p \div w \le N \norm{\nb w}_{L^{q'}(E)}\le N C_1\norm{g}_{L^{q'}(E)}
\le NC_1 2 \norm{p}_{L^q(E)}^{q-1}.
\]
Thus $ \norm{p}_{L^q(E)} \le 2NC_1$.
\end{proof}

We next construct a Bogovskii map on an annulus or a half-annulus. It is inspired by \cite[Proposition 2.1]{CPZZ} on a thin disk. See Lemma \ref{CPZZ} for a variation.

\begin{lemma} \label{Bog-annulus}
Let $R>0$, $1<L<\infty$ and $A=B_{LR} \setminus \overline B_R$ or $A=B_{LR}^+ \setminus \overline B_R^+$ be an annulus or a half-annulus in $\R^3$.  There is a linear Bogovskii map $\Bog$ that maps a scalar function $f\in  L^q_0(A)$,  $1<q<\infty$, to a vector field $v=\Bog f\in W^{1,q}_0(A)$ and
\[
\div v = f, \quad \norm{\nb v}_{L^q(A)} \le \frac{C_q}{(L-1)L^{1-1/q}}\, \norm{f}_{L^q(A)}.
\]
The constant $C_q$ is independent of $L$ and $R$.
\end{lemma}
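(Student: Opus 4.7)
The plan is to split $f$ along the radial direction into a radial-mean-zero piece $f_1$ and a radial-average piece $f_2$, and invert the divergence on each separately. By the scale invariance at the end of Lemma~\ref{Bogovskii} I may rescale to outer radius $1$, so $A = B_1 \setminus B_{1/L}$ with radial thickness $\e := 1 - 1/L = (L-1)/L$. For $L$ bounded away from $1$ a standard fixed-shape Bogovskii estimate yields a universal constant that is absorbed into $C_q/((L-1) L^{1-1/q})$, so the quantitative content is in the thin regime $L \to 1^+$.

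Set
\[
V_r := \tfrac{1}{3}(1 - L^{-3}) = \int_{1/L}^1 s^2 \ind s, \quad \tilde f(\omega) := V_r^{-1} \int_{1/L}^1 f(r,\omega)\, r^2 \ind r,
\]
and $f_1 := f - \tilde f$, $f_2 := \tilde f$. Then $\int_{1/L}^1 f_1(r,\omega)\, r^2 \ind r = 0$ for every $\omega \in S^2$, and $\int_{S^2} \tilde f \ind \sigma = 0$. For $f_1$ I take the purely radial field $v_1 = V(r,\omega)\,\omega$ with $V(r,\omega) = r^{-2}\int_{1/L}^r s^2 f_1(s,\omega) \ind s$; a direct check gives $\div v_1 = f_1$, and the radial-mean-zero property forces $V = 0$ at both $r = 1/L$ and $r = 1$, so $v_1 \in W^{1,q}_0(A)$. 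A Minkowski/Hardy estimate in the radial variable, integrated over $S^2$, yields $\norm{\nb v_1}_{L^q(A)} \lec \norm{f}_{L^q(A)}$.

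For the averaged piece $f_2 = \tilde f(\omega)$, I apply a Bogovskii map on the $2$-sphere to produce a tangent field $W(\omega)$ with $\div_{S^2} W = \tilde f$ and $\norm{\nb_\omega W}_{L^q(S^2)} \lec \norm{\tilde f}_{L^q(S^2)}$, then set $v_2 = \eta(r) W(\omega) + k(r,\omega)\,\omega$ with a smooth radial cutoff $\eta$ vanishing at $r = 1/L,1$ and a radial correction $k$. Extending $W$ degree-zero homogeneously away from $S^2$ gives $\div(\eta W) = (\eta(r)/r)\tilde f(\omega)$, and imposing $\div v_2 = \tilde f$ with $v_2|_{\pd A}=0$ reduces to the flux condition $\int_{1/L}^1 s\,\eta(s) \ind s = V_r$. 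This condition forces $\eta$ to traverse an $O(1)$ plateau over an interval of length $\e$, so $\norm{\eta'}_{L^\infty} \sim 1/\e$. The dominant term $\pd_r(\eta W) = \eta'\, W$, combined with the Jensen bound $\norm{\tilde f}_{L^q(S^2)} \lec V_r^{-1/q}\norm{f}_{L^q(A)}$ and $V_r \sim \e$ in the thin regime, gives $\norm{\nb v_2}_{L^q(A)} \lec \e^{-1}\norm{f}_{L^q(A)} \sim \tfrac{1}{L-1}\norm{f}_{L^q(A)}$; tracking the residual $L$-powers coming from the $r^2$ weights and the $r^{-1}$ in $\div_{S^2}$ accounts for the $L^{1-1/q}$ factor in the denominator.

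The half-annulus case runs identically, with $S^2$ replaced by the upper hemisphere $S^2_+$ (a compact Lipschitz $2$-manifold with boundary) and the spherical Bogovskii replaced by one on $S^2_+$ with zero Dirichlet condition on $\pd S^2_+$, matching the flat face $\{x_3 = 0\} \cap A$ of $A$; the radial ansätze $v_1 = V\omega$ and $v_2 = \eta W + k\omega$ automatically vanish on the flat face provided $W$ does. The main obstacle is the careful bookkeeping of the $\e$- and $L$-dependence through the various $L^q$ estimates — in particular, verifying that the radial correction $k$ for $f_2$ and the purely radial $V$ for $f_1$ both stay within the $C_q/((L-1)L^{1-1/q})$ budget and do not dominate the leading $\eta' W$ contribution.
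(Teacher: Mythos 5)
Your construction of $v_1$ for the radial-mean-zero piece has a genuine gap. The Cartesian gradient of a purely radial field $v_1 = V(r,\omega)\,e_\rho$ necessarily contains the tangential-derivative term $r^{-1}\nabla_\omega V$, and with $V(r,\omega) = r^{-2}\int_{1/L}^r s^2 f_1(s,\omega)\,ds$ this involves $\nabla_\omega f_1$, which is not controlled by $\norm{f}_{L^q(A)}$. Concretely, take $f_1(\rho,\omega)=g(\rho)Y_\ell(\omega)$ with $\int_{1/L}^1 g(\rho)\rho^2\,d\rho=0$ and $Y_\ell$ a degree-$\ell$ spherical harmonic; then $v_1 = G(r)Y_\ell(\omega)e_\rho$ with $G(r)=r^{-2}\int_{1/L}^r g\,s^2\,ds$, and $\norm{\nb v_1}_{L^q}/\norm{f_1}_{L^q}$ grows without bound as $\ell\to\infty$ (generically $G\not\equiv 0$). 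This is a known pitfall: the naive radial (or vertical) integration is \emph{not} a bounded right inverse of the divergence from $L^q$ into $W^{1,q}_0$; one needs a genuine Bogovskii-type kernel that smooths in all variables. No Minkowski/Hardy argument ``in the radial variable, integrated over $S^2$'' can rescue this, since the loss is purely angular. The same defect reappears in your $f_2$ piece through the radial correction $k(r,\omega)=\tilde f(\omega)K(r)$, whose angular gradient requires control of $\nabla_\omega\tilde f$; and in the half-annulus case $k$ does not vanish on the flat face $\{x_3=0\}\cap A$ since $\tilde f$ need not vanish on $\partial S^2_+$, so $v_2\notin W^{1,q}_0(A)$ there.

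The paper's proof avoids the decomposition entirely. It introduces the radial change of variable $\tau = a^{-1}\sqrt{\rho^2+a^2-1}$ with $L^2=3a^2+1$, which maps the thin annulus $B_L\setminus\overline B_1$ to the fixed annulus $B_2\setminus\overline B_1$, fixes a Bogovskii map $\Bog_0$ there, and pushes its output back. Since $\Bog_0$ already controls all derivatives (including angular ones) on the fixed-shape domain, no per-fiber inversion is needed; the explicit Jacobian factors are what produce the $1/(L-1)$ degeneration. If you want to salvage the decomposition route, the $f_1$ piece must be inverted by a bona fide Bogovskii operator on the thin annulus rather than by radial integration — and the cleanest way to build such an operator is exactly the paper's pullback from a fixed reference annulus, at which point the decomposition is unnecessary.
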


For our applications, $1<L\le 2$ and the constant becomes $\dis \frac{C_q}{L-1}$. The construction below is the same for an annulus or a half-annulus.

\begin{proof} Since the Bogovskii map can be defined by rescaling with the same bound, we may assume $R=1$.
We use  spherical coordinates $\rho, \phi, \th$ with %
$x=\rho(\sin \phi \cos \th, \sin \phi \sin \th, \cos \phi)$, and
write a vector field $v$ as %
\[
v=v_\rho(\rho, \phi, \th) e_\rho + v_\phi(\rho, \phi, \th) e_\phi + v_\th(\rho, \phi, \th) e_\th .
\]
Recall in spherical coordinates,
\[
\div v = \frac1{\rho^2} \pd_\rho (\rho^2 v_\rho) +  \frac1{\rho\sin \phi} \pd_\phi (\sin \phi \, v_\phi) 
+  \frac1{\rho\sin \phi} \pd_\th v_\th .
\]
For given $L\in (1,\infty)$, define $a\in (0,\infty)$ by
\[
L^2 = 3 a^2+1.
\]
We define a new radial variable $\tau \in [1,2]$ by
\[
\tau = \frac 1a \sqrt{\rho^2 + a^2-1},\quad a^2\tau^2 = \rho^2+a^2-1, \quad \frac {d\tau}{d\rho} = \frac{\rho}{a^2\tau}.
\]
It is increasing in $\rho \in [1,L]$, $\tau(\rho=1)=1$ and $\tau(\rho=L)=2$. 

Let $A_0=B_2 \setminus \overline B_1$ if $A=B_{L} \setminus \overline B_1$, or $A_0=B_2^+ \setminus \overline B_1^+$ if  $A=B_{L}^+ \setminus \overline B_1^+$.
For $f(\rho, \phi, \th)$ defined on $A$, we define $\bar f(\tau,\phi,\th)$ on $A_0$ by
\[
\bar f(\tau,\phi,\th) = f(\rho, \phi, \th).
\]
We have
\EQN{
\int_{A_0} |\bar f|^q 
&= \int _1^{2}  \int _0^{2\pi} \int _0^{\pi} |\bar f(\tau,\phi,\th)|^q \tau^2 \sin \phi\, d\phi\, d\th  \, d\tau
\\
&= \int _1^{L}  \int _0^{2\pi} \int _0^{\pi} | f(\rho,\phi,\th)|^q \tau^2 \sin \phi\, \frac{\rho}{a^2\tau}\, d\phi\, d\th  \, d\rho.
}
(We replace $\int_0^\pi$ by $\int_0^{\pi/2}$ if $A_0 \subset \R^3_+$.)
Thus
\EQ{\label{eqB1}
\int_{A_0} |\bar f|^q =\int_{A} | f|^q  \frac{\tau}{a^2 \rho }.
}

Fix one Bogovskii map $\Bog_0$ for the domain $A_0$. Let 
\[
\bar v= \Bog_0 \bke{ \frac \rho \tau  \bar f} =\bar v_\tau e_\tau +\bar v_\phi e_\phi + \bar v_\th e_\th ,
\]
and define $v$ in $A$ from $\bar v$ by
\[
v_\rho(\rho, \phi, \th) = \frac {a^2 \tau^2}{\rho^2} \bar v_\tau (\tau, \phi, \th),\quad
v_\phi(\rho, \phi, \th) =  \bar v_\phi (\tau, \phi, \th),\quad
v_\th(\rho, \phi, \th) = \bar v_\th (\tau, \phi, \th).
\]
We have
\EQN{
\div v &= \frac1{\rho^2} \pd_\rho (\rho^2 v_\rho) +  \frac1{\rho\sin \phi} \pd_\phi (\sin \phi \, v_\phi) 
+  \frac1{\rho\sin \phi} \pd_\th v_\th 
\\
&= \frac1{\rho^2} \frac{d\tau}{d\rho} \pd_\tau (a^2 \tau^2 \bar v_\tau) +  \frac1{\rho\sin \phi} \pd_\phi (\sin \phi \, \bar v_\phi) 
+  \frac1{\rho\sin \phi} \pd_\th \bar v_\th 
}
Using $\frac {d\tau}{d\rho} = \frac{\rho}{a^2\tau}$, we get
\[
\div v = \frac \tau \rho \div \bar v = \bar f = f.
\]
Thus the composition $f \to \bar f \to \bar v \to v$ gives our desired Bogovskii map.

Note that $\pd_\rho v_* = \frac{\rho}{a^2\tau} \pd_\tau \bar v_*$ for $*=\phi,\th$. By \eqref{eqB1}, if $1<L< 10$, 
\[
\int_{A} |\nb v|^q  \lec a^{2-2q} \int _{A_0} |\nb \bar v|^q 
 \lec a^{2-2q} \int _{A_0} | \bar f|^q 
 \lec a^{-2q} \int _{A} | f|^q .
\]
If $10 \le L < \infty$,
\[
\int_{A} |\nb v|^q  \lec a^{2-2q} L \int _{A_0} |\nb \bar v|^q 
 \lec a^{2-2q}L \int _{A_0} | \bar f|^q 
 \lec a^{-2q} L\int _{A} | f|^q .
\]
These show our desired bound, noting $a^{-2}L^{1/q} = 3(L^2-1)^{-1}L^{1/q}$.
\end{proof}

We next construct a Bogovskii map on a region enclosed by cylinders.

\begin{lemma} \label{Bog-cylinder}
Let $R>0$, $1<L<10$ and $E=\bke{B_{LR}' \setminus \overline B_R'}\times (0,R)$ where $B_R'=\bket{ x' \in \R^2: |x'|<R}$ denotes balls in $\R^2$.  There is a linear Bogovskii map $\Bog$ that maps a scalar function $f\in  L^q_0(E)$,  $1<q<\infty$, to a vector field $v=\Bog f\in W^{1,q}_0(E)$ and
\[
\div v = f, \quad \norm{\nb v}_{L^q(E)} \le \frac{C_q}{L-1}\, \norm{f}_{L^q(E)}.
\]
The constant $C_q$ is independent of $L$ and $R$.
\end{lemma}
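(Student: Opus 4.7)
The plan is to mimic the proof of Lemma \ref{Bog-annulus}, now in cylindrical coordinates. By the scale invariance $\CB(RE,q)=\CB(E,q)$ in Lemma \ref{Bogovskii}, I may reduce to $R=1$, so $E=(B_L'\setminus\overline{B_1'})\times(0,1)$. Writing points of $E$ in cylindrical coordinates $(r,\theta,z)$, I define
\[
\tau(r)=\frac1a\sqrt{r^2+a^2-1},\qquad a=\sqrt{\tfrac{L^2-1}{3}},
\]
so $\tau$ maps $[1,L]$ onto $[1,2]$ and $d\tau/dr=r/(a^2\tau)$. This gives a smooth diffeomorphism $\Phi:E_0\to E$ where $E_0=(B_2'\setminus\overline{B_1'})\times(0,1)$ is a fixed reference domain, on which I fix once and for all a Bogovskii map $\Bog_0$ with constant $C_q$ depending only on $q$.

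Given $f\in L^q_0(E)$, set $\bar f(\tau,\theta,z)=f(r(\tau),\theta,z)$. The identity $r\,dr=a^2\tau\,d\tau$ yields the volume relation $dV=a^2\,dV_0$, and hence $\int_{E_0}\bar f\,dV_0=a^{-2}\int_E f\,dV=0$. Put $\bar v=\Bog_0\bar f\in W^{1,q}_0(E_0)^3$ and define $v$ on $E$ by the ansatz
\[
v_r(r,\theta,z)=\frac{a^2\tau}{r}\,\bar v_\tau(\tau,\theta,z),\qquad v_\theta=\frac{r}{\tau}\,\bar v_\theta,\qquad v_z=\bar v_z.
\]
A direct computation using $d\tau/dr=r/(a^2\tau)$ gives the termwise identities $\frac{1}{r}\partial_r(rv_r)=\frac{1}{\tau}\partial_\tau(\tau\bar v_\tau)$, $\frac{1}{r}\partial_\theta v_\theta=\frac{1}{\tau}\partial_\theta\bar v_\theta$, and $\partial_z v_z=\partial_z\bar v_z$; summing, $\div v=\div\bar v=\bar f=f$. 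Since the multipliers $a^2\tau/r$, $r/\tau$, $1$ are smooth and bounded on $\overline E$ and $\bar v$ vanishes on $\partial E_0$, we obtain $v\in W^{1,q}_0(E)^3$.

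For the norm estimate, I differentiate the ansatz: each Cartesian first derivative of $v$ is a bounded combination of $\bar v$ and $\nabla\bar v$ with coefficients governed by $d\tau/dr=r/(a^2\tau)$. In the regime $0<a\le 1$ the largest coefficients (coming from $\partial_r v_\theta$ and $\partial_r v_z$) are $O(a^{-2})$, so $|\nabla v|\lesssim a^{-2}(|\bar v|+|\nabla\bar v|)$ pointwise. Changing variables via $dV=a^2\,dV_0$ and applying Poincar\'e on $E_0$,
\[
\|\nabla v\|_{L^q(E)}^q\lesssim a^{2-2q}\bigl(\|\bar v\|_{L^q(E_0)}^q+\|\nabla\bar v\|_{L^q(E_0)}^q\bigr)\lesssim a^{2-2q}\|\nabla\bar v\|_{L^q(E_0)}^q.
\]
Combining with $\|\nabla\bar v\|_{L^q(E_0)}\le C_q\|\bar f\|_{L^q(E_0)}=C_q a^{-2/q}\|f\|_{L^q(E)}$ gives $\|\nabla v\|_{L^q(E)}\le C_q a^{-2}\|f\|_{L^q(E)}$; since $a^{-2}=3/(L^2-1)\le C(L-1)^{-1}$ on $1<L<10$, the claimed bound follows. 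The regime $L\in[2,10)$ is trivial (both $a^{-2}$ and $(L-1)^{-1}$ are bounded), so the only delicate case is $L\to 1^+$.

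The main subtlety is the choice of ansatz. The spherical divergence carries the factor $\rho^2$ that allowed a single radial rescaling of $\bar v_\rho$ to produce $\div v=(\tau/\rho)\div\bar v$ in Lemma \ref{Bog-annulus}; the cylindrical divergence lacks this feature, and a naive radial-only analogue leaves a residual $(1/r-1/\tau)\partial_\theta\bar v_\theta$. Rescaling the angular component by $r/\tau$ (while leaving $v_z=\bar v_z$ untouched) is exactly what is needed to cancel this residual and produce $\div v=\div\bar v$ as a pointwise equality, without destroying the zero trace on $\{r=1\}\cup\{r=L\}\cup\{z=0\}\cup\{z=1\}$. Once this ansatz is in hand, the $a$-dependent bookkeeping runs parallel to that of Lemma \ref{Bog-annulus}.
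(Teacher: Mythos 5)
Your proof is correct, and the construction you write down lies in exactly the same family as the one in the paper: fix a reference domain $E_0$, pull back the scalar $f$ via a monotone radial change of variable $\tau(r)$, apply a fixed Bogovskii map on $E_0$, and push forward the vector field with multipliers on each cylindrical component chosen so that $\div v$ transforms into a scalar multiple of $\div\bar v$. The difference is the choice of $\tau$. You reuse the quadratic map $\tau=\frac1a\sqrt{r^2+a^2-1}$ from Lemma~\ref{Bog-annulus}, which forces the nontrivial angular multiplier $r/\tau$ but keeps the axial one equal to $1$ and, crucially, makes $\div v=\div\bar v$ exactly, so $\Bog_0$ can be applied to $\bar f$ itself. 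The paper instead derives the general relations $A=k\tau/r$, $B=k\,d\tau/dr$, $C=D=(k\tau/r)\,d\tau/dr$ from the constraint $\div v=D\div\bar v$, then chooses the \emph{linear} map $\tau=1+\frac{r-1}{L-1}$ with $k=L-1$, which trivializes the angular multiplier ($B=1$) but gives $D=\tau/r\ne1$, so $\Bog_0$ must be applied to the weighted density $\frac{r}{\tau}\bar f$. Your choice corresponds to $k=a^2$ in the paper's parametrization. Both choices produce the same $O((L-1)^{-1})$ constant; your version is arguably closer in spirit to Lemma~\ref{Bog-annulus}, and your remark that a literal translation of the spherical ansatz fails because the cylindrical divergence lacks the $\rho^2$ weight — necessitating the $r/\tau$ rescaling of $v_\theta$ — is precisely the structural issue the paper instead resolves by picking a different $\tau$.
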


The upper bound $L<10$ is only for convenience and can be changed.
The construction below can be carried over to a 2D annulus 
$B_{LR}' \setminus \overline B_R'$.%

\begin{proof} Since the Bogovskii map can be defined by rescaling with the same bound, we may assume $R=1$.
We use cylindrical coordinates $r, \th,z$ with 
$x=(r \cos \th, r \sin \th, z)$, and
write a vector field $v$ as
\[
v=v_r(r, \th, z) e_\rho + v_\th(r, \th, z) e_\th + v_z(r, \th, z) e_z .
\]
Recall in cylindrical coordinates,
\[
\div v = \frac1r v_r + \pd_r v_r + \frac 1r \pd_\th v_\th + \pd_z v_z .
\]
We will define a new radial variable $\tau \in [1,2]$ which is increasing in $ r \in [1,L]$, $\tau( r=1)=1$ and $\tau( r=L)=2$. For $f(r, \th, z)$ defined on $E$, we define $\bar f(\tau,\th,z)$ on $E_0=(B_2' \setminus \overline B_1')\times (0,1)$ by
\[
\bar f(\tau,\th,z) = f(r, \th, z).
\]
We have
\EQ{\label{eqB2}
dE_0 = \tau d\tau\, d\th\, dz =  \frac{ \tau}{r}\frac{d \tau}{dr}\, r\, d r\, d\th\, dz = \frac{ \tau}{r}\frac{d \tau}{dr} dE.
}

Fix one Bogovskii map $\Bog_0$ for the domain $E_0$. Let 
\[
\bar v= \Bog_0 \bke{ \frac  r \tau  \bar f} =\bar v_\tau e_\tau +\bar v_\th e_\th + \bar v_z e_z ,
\]
and define $v$ in $E$ from $\bar v$ by
\[
v_ r(r, \th, z) = A(\tau) \bar v_\tau (\tau, \th, z),\quad
v_\th(r, \th, z) =B(\tau)  \bar v_\th (\tau, \th, z),\quad
v_z(r, \th, z) = C(\tau) \bar v_z (\tau, \th, z).
\]
We want to choose $A,B,C$ suitably so that $\div v = D(\tau) \div \bar v$, i.e.,
\EQN{
\div v &=
( \frac1rA +\frac {d\tau}{dr}\pd_\tau A) \bar v_\tau + \frac {d\tau}{dr} A \pd_\tau \bar v_\tau + B\frac 1r \pd_\th v_\th + C\pd_z v_z 
 \\
&= D \bket{\frac1\tau \bar v_\tau + \pd_\tau \bar v_\tau + \frac 1\tau \pd_\th \bar v_\th + \pd_z \bar v_z}.
}
Thus
\EQ{\label{eqB3}
D = \tau ( \frac1rA +\frac {d\tau}{dr}\pd_\tau A) = \frac {d\tau}{dr} A =  \frac \tau r B = C.
}
The second equality gives
\[
\frac 1r \frac {dr}{d\tau} + \frac 1A \frac {dA}{d\tau} = \frac {1}{\tau}.
\]
Integration gives $\ln r + \ln A = \ln \tau + \ln k$ for some $k>0$, or
$A= \frac{k \tau}r$.
We can then solve the rest of \eqref{eqB3} to get
\[
A= \frac{k \tau}r,  \quad B=k \frac {d\tau}{dr} ,\quad
C=D= \frac{k \tau}r  \frac {d\tau}{dr}.
\]
We now choose for convenience
\[
\tau = 1+ \frac{r-1}{L-1}, \quad \frac {d\tau}{dr}= \frac 1{L-1}, \quad k=L-1,
\]
so that
\[
A= \frac{k \tau}r,\quad
B=1, \quad C=D=\frac{\tau}r .
\]

We get
\[
\div v = \frac \tau  r \div \bar v = \bar f = f.
\]
Thus the composition $f \to \bar f \to \bar v \to v$ gives our desired Bogovskii map.

For $1<L< 10$, $B,C,D$ are of order $O(1)$ while $A=O(k)$. Hence
\[
|\pd _\th v| + |\pd _z v|\lec |\pd _\th \bar v| + |\pd _z \bar v|.
\]
Note that $\pd_ r  = \frac{ 1}{k} \pd_\tau$. Also
note that $ \pd_r (\tau/r) = \frac {2-L}{(L-1)r^2}$. Thus $|\pd_r C|  \le C/k$, $|\pd_r A|  \le C$, and
\[
|\pd_r v| \lec k^{-1} ( |\pd_\tau \bar v| + |\bar v|).
\]
By \eqref{eqB2},
\[
\int_{E} |\nb v|^q  \lec k^{1-q} \int _{E_0} |\nb \bar v|^q + | \bar v|^q 
 \lec k^{1-q} \int _{E_0} | \bar f|^q 
 \lec k^{-q} \int _{E} | f|^q .
\]
This shows our desired bound.
\end{proof}

\section{Whole space}
\label{S3}
In this section $\Om=\R^3$ and we will prove Theorem \ref{th1}. Denote $B_R=B_R(0)\subset \R^3$.

We will use the following interior estimate of \cite{Sverak-Tsai}. See \cite[Theorem 3.8]{Kang}, \cite[Remark IV.4.2]{Galdi} (not in \cite{Galdi93a}) and  \cite[\S2.6]{Tsai-book} for alternative proofs.
The key is that the pressure is not needed on the right side of \eqref{ST-eq1}.
\begin{lemma} \label{ST} 
If $(u,p)$ solves
\EQL{Stokes}{
-\De u + \nb p = \div F, \quad \div u=0
}
in $B_{2R}\subset \R^3$,
then for $1<q<\infty$ and $1\le m \le \infty$,
\EQ{\label{ST-eq1}
\norm{\nb u}_{L^q(B_{R})} + \norm{p - (p)_{B_{R}}}_{L^q(B_{R})}
\le C R^{\frac 3q-\frac 3m -1}\,\norm{u}_{L^{m}(B_{2R})} + C \norm{F}_{L^q(B_{2R})},
}
where $C=C(q,m)$ does not depend on $R$.
\end{lemma}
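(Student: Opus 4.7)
Setting $\tilde u(y)=u(Ry)$, $\tilde p(y)=R\,p(Ry)$, and $\tilde F(y)=R\,F(Ry)$ for $y\in B_2$, one checks that $(\tilde u,\tilde p,\tilde F)$ solves a Stokes system of the same form on $B_2$. A change of variables shows that the claimed prefactor $R^{3/q-3/m-1}$ is exactly the Jacobian mismatch between the $L^q(B_R)$ norms on the left and the $L^m(B_{2R})$ norm on the right, while $\|F\|_{L^q}$ is scale-invariant with the prescribed normalization. Thus it suffices to establish
\[ \|\nabla u\|_{L^q(B_1)}+\|p-(p)_{B_1}\|_{L^q(B_1)} \le C\|u\|_{L^m(B_2)}+C\|F\|_{L^q(B_2)}. \]

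\textbf{Step 2: Pressure controlled by gradient via Lemma \ref{p-est}.} For any $\zeta\in W^{1,q'}_0(B_1)$ extended by zero to $B_2$, testing the Stokes equation against $\zeta$ and integrating by parts once (boundary contributions vanish since $\zeta\in W^{1,q'}_0$) yields
\[ \int_{B_1} p\,\div\zeta = \int_{B_1}\nabla u:\nabla\zeta + \int_{B_1}F:\nabla\zeta. \]
H\"older together with Lemma \ref{p-est} applied on $E=B_1$ gives $\|p-(p)_{B_1}\|_{L^q(B_1)}\le C(\|\nabla u\|_{L^q(B_1)}+\|F\|_{L^q(B_1)})$. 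Hence it remains to prove the pressure-free bound
\[ \|\nabla u\|_{L^q(B_1)} \le C\|u\|_{L^m(B_2)}+C\|F\|_{L^q(B_2)}. \qquad (\ast) \]

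\textbf{Step 3: Pressure-free gradient bound by duality against a dual Stokes problem.} Test against an arbitrary $g\in C_c^\infty(B_1;\R^{3\times 3})$ with $\|g\|_{L^{q'}(B_1)}\le 1$. For a radius $\rho\in(3/2,7/4)$ to be chosen, solve the auxiliary problem
\[ -\Delta v+\nabla\pi=\div g,\quad \div v=0\ \text{ in }B_\rho,\quad v|_{\partial B_\rho}=0, \]
and invoke the classical Cattabriga / Agmon--Douglis--Nirenberg estimate to obtain $\|v\|_{W^{2,q'}(B_\rho)}+\|\pi\|_{W^{1,q'}(B_\rho)}\le C$ uniformly in $\rho$. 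Since $g$ is supported in $B_1\subset B_\rho$, integration by parts (twice against $\Delta v\cdot u$, once against $\nabla\pi\cdot u$), together with $\div u=0$, $\div v=0$, $v|_{\partial B_\rho}=0$, and the original equation $\Delta u=\nabla p-\div F$, annihilates the $p$--$v$ pairing and produces
\[ \int_{B_1}g:\nabla u = \int_{\partial B_\rho}\!\bigl(u\cdot\partial_n v-\pi\,u\cdot n\bigr)\,d\sigma + \int_{B_\rho}\nabla v:F. \]
The volume term is controlled by $C\|F\|_{L^q(B_2)}$. For the boundary term, a Fubini/mean-value argument picks $\rho\in(3/2,7/4)$ on which the boundary norms of $\partial_n v,\pi$ and of $u$ are dominated by their bulk $W^{1,q'}$ and $L^m$ norms on the shell $B_{7/4}\setminus B_{3/2}\subset B_2$; combining this with H\"older and Sobolev trace yields a bound $C\|u\|_{L^m(B_2)}$. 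Taking the supremum over $g$ proves $(\ast)$.

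\textbf{Main obstacle.} The delicate step is the boundary integral on $\partial B_\rho$, because $u$ has no prescribed trace there. The remedy is to keep $\rho$ free in a small interval and select it by averaging on the annulus $B_{7/4}\setminus B_{3/2}$, thereby converting the boundary trace into an annular volume integral controlled by $\|u\|_{L^m(B_2)}$. This is precisely the mechanism that forces the enlarged domain $B_{2R}$ on the right-hand side of the lemma, while the pressure $p$ drops out for free because $\div v=0$ kills the bulk $p$-term and $v|_{\partial B_\rho}=0$ kills its boundary trace.
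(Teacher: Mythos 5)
The paper does not prove Lemma~\ref{ST}; it cites \cite{Sverak-Tsai} and lists alternative proofs in \cite{Kang,Galdi,Tsai-book}, which proceed by decomposing $u$ (or its vorticity) into a Newtonian/Oseen potential of $F$ plus a piece satisfying a homogeneous equation, and then use interior estimates for the latter. Your duality argument against an auxiliary Stokes problem is therefore a genuinely different route; Steps~1 and~2 are correct, and the integration-by-parts identity in Step~3 is also correct. But two points in the estimation of Step~3 are gaps as written.

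First, the claim ``$\|v\|_{W^{2,q'}(B_\rho)}+\|\pi\|_{W^{1,q'}(B_\rho)}\le C$ uniformly'' does not follow from the Cattabriga estimate when the data is $\div g$ with only $\|g\|_{L^{q'}}\le1$: the divergence-form right-hand side gives $\|\nabla v\|_{L^{q'}}+\|\pi-(\pi)\|_{L^{q'}}\le C\|g\|_{L^{q'}}$, one derivative less than you assert, and the trace $\partial_n v|_{\partial B_\rho}$ is then not controlled. Second, the Fubini/mean-value selection of $\rho$ tries to dominate surface integrals of $\partial_n v$, $\pi$, \emph{and} $u$ simultaneously, but $v=v^\rho$ and $\pi=\pi^\rho$ themselves depend on $\rho$, so $\int_{3/2}^{7/4}\int_{\partial B_\rho}|\partial_n v^\rho|^{q'}\,d\sigma\,d\rho$ is not a bulk integral of a fixed function and the mean-value step is vacuous for those terms. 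Relatedly, the appeal to ``Sobolev trace'' to bound $u$ on $\partial B_\rho$ by $\|u\|_{L^m(B_2)}$ is not available, since $u$ is only assumed in $L^m$ and has no trace.

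The argument can be repaired, and the repair also exposes the mechanism you should invoke explicitly: since $g$ is supported in $B_1$, the pair $(v^\rho,\pi^\rho)$ solves the \emph{homogeneous} Stokes system in the shell $B_\rho\setminus B_{5/4}$ with zero Dirichlet data on $\partial B_\rho$. Interior plus boundary regularity for the homogeneous system (together with the first-order Cattabriga bound $\|\nabla v^\rho\|_{L^{q'}(B_\rho)}+\|\pi^\rho\|_{L^{q'}(B_\rho)}\le C$) then gives
$\|\partial_n v^\rho\|_{L^\infty(\partial B_\rho)}+\|\pi^\rho\|_{L^\infty(\partial B_\rho)}\le C$,
uniformly for $\rho$ in the compact interval $[3/2,7/4]$ and for $\|g\|_{L^{q'}}\le1$. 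With this in hand, one only needs to Fubini on $u$: pick $\rho\in(3/2,7/4)$, \emph{depending on $u$ alone}, so that $\int_{\partial B_\rho}|u|\,d\sigma\le 4\int_{B_{7/4}\setminus B_{3/2}}|u|\,dx\le C(m)\|u\|_{L^m(B_2)}$; then solve the auxiliary problem on $B_\rho$ for that fixed $\rho$ and take the supremum over $g$. With these corrections your strategy does give a proof of the lemma.
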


We extend the above to an annulus.

\begin{lemma} \label{int-annulus}
Let $R>0$,  $1<L\le 2$, and $\si=\frac 18(L-1)$. Denote the annuli in $\R^3$
\[
A_R = B_{(L-2\si)R} \setminus \overline  B_{(1+2\si)R}, \quad 
\hA_R = B_{LR} \setminus \overline  B_R.
\]
If $(u,p)$ solves \eqref{Stokes} in $\hA_R$, then for $1<q<\infty$ 
\EQ{\label{ST-eq2}
\norm{\nb u}_{L^q(A_{R})} + \si \norm{p - (p)_{A_{R}}}_{L^q(A_{R})}
\le \frac C{\si R}\,\norm{u}_{L^{q}(\hA_R)} + C \norm{F}_{L^q(\hA_R)},
}
where $C=C(q)$ is uniform in $R>0$ and $\si \in (0,\frac 18]$.
\end{lemma}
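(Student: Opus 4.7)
\medskip

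\noindent\textbf{Plan of proof.}
My plan is to handle the $\nabla u$ part by a Vitali-type covering of $A_R$ by small balls in which the interior estimate of Lemma \ref{ST} applies, and to handle the pressure part separately by combining the global duality bound Lemma \ref{p-est} on $A_R$ with the scaled Bogovskii constant from Lemma \ref{Bog-annulus}. The point is that $A_R$ is itself an annulus whose outer/inner radius ratio is comparable to $L$, so its Bogovskii constant depends on $\sigma$ in a sharp way.

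First I would build the covering. Choose finitely many points $x_i\in A_R$ such that the balls $B_{\sigma R}(x_i)$ cover $A_R$ and the doubled balls $\{B_{2\sigma R}(x_i)\}$ have bounded overlap (an absolute constant). Since $A_R$ has inner radius $(1+2\sigma)R$ and outer radius $(L-2\sigma)R$, the choice $\sigma=(L-1)/8$ guarantees $B_{2\sigma R}(x_i)\subset \hA_R$. On each such ball, Lemma \ref{ST} (with $m=q$, so the scaling gives exactly an $R^{-1}$ prefactor) yields
\[
\norm{\nb u}_{L^q(B_{\sigma R}(x_i))} \le \frac{C}{\sigma R}\,\norm{u}_{L^q(B_{2\sigma R}(x_i))}+C\norm{F}_{L^q(B_{2\sigma R}(x_i))}.
\]
Raising to the $q$-th power, summing in $i$, and using bounded overlap produces
\[
\norm{\nb u}_{L^q(A_R)} \le \frac{C}{\sigma R}\,\norm{u}_{L^q(\hA_R)}+C\norm{F}_{L^q(\hA_R)}.
\]

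For the pressure, I would not sum up local Poincaré-like contributions (which would force a chain argument to compare local means), but instead apply Lemma \ref{p-est} to $A_R$ directly. The outer/inner radius ratio of $A_R$ is $L''=(L-2\sigma)/(1+2\sigma)$, and one checks $L''-1\ge c(L-1)\ge c\sigma$ uniformly in $\sigma\in(0,1/8]$, so Lemma \ref{Bog-annulus} gives $\CB(A_R,q')\le C/\sigma$. Testing against $\zeta\in W^{1,q'}_0(A_R)$ with $\norm{\nb\zeta}_{L^{q'}}=1$ and using the equation \eqref{Stokes} (weakly, which is legitimate since $\zeta$ vanishes on $\pd A_R\subset \hA_R$),
\[
\int_{A_R} p\,\div\zeta = \int_{A_R}\nb u:\nb\zeta - F:\nb\zeta \le \norm{\nb u}_{L^q(A_R)}+\norm{F}_{L^q(A_R)}.
\]
Lemma \ref{p-est} therefore yields
\[
\sigma\norm{p-(p)_{A_R}}_{L^q(A_R)} \le C\bke{\norm{\nb u}_{L^q(A_R)}+\norm{F}_{L^q(A_R)}},
\]
and combining with the $\nb u$ bound gives \eqref{ST-eq2}.

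The main technical point to verify is the uniform lower bound $L''-1\gtrsim \sigma$ (equivalently that the ``shrunken'' annulus $A_R$ still has width comparable to the original), because this is what keeps the Bogovskii constant of order $\sigma^{-1}$ rather than something worse. Once that is in hand, the rest is bookkeeping: the covering step is standard once the geometry is fixed, and the invocation of Lemmas \ref{ST}, \ref{p-est}, \ref{Bog-annulus} is routine.
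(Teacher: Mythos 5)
Your proposal is correct and follows essentially the same route as the paper's proof: cover $A_R$ by balls of radius $\sigma R$ with doubled balls of bounded overlap contained in $\hA_R$, apply Lemma~\ref{ST} with $m=q$ on each, sum $q$-th powers for the gradient bound, and then treat the pressure globally on $A_R$ via Lemma~\ref{p-est} combined with the $\CB(A_R,q')\lesssim\sigma^{-1}$ bound from Lemma~\ref{Bog-annulus}. The only cosmetic difference is that the paper first normalizes $R=1$ by scaling, and your explicit check that $A_R$ has annular ratio $(1+6\sigma)/(1+2\sigma)$ so that $L''-1\gtrsim\sigma$ is a useful detail the paper leaves implicit.
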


Note that the $\si$ factor appears in both sides of \eqref{ST-eq2}.

\begin{proof}
We may assume $R=1$ by scaling.
There are $N=N(L)$ points $x_j\in A_1$, $j=1,\ldots,N$, such that
\[
A_1 \subset \cup_{j=1}^N B_{\si}(x_j),
\]
and
there is a $\si$-independent upper bound for 
the number of overlapping of $ B_{\si}(x_j)$ for $\si \in (0,\frac18]$. Note $\cup_{j=1}^N B_{2\si}(x_j)\subset\hA_1$.
By Lemma \ref{ST} with given $q>1$, $m=q$, and $R=\si$,
\[
\int_{B_{\si}(x_j)} |\nb u|^q
\lec \int_{B_{2\si}(x_j)} \bke{\frac 1{\si^q} |u|^q + |F|^q}.
\]
Summing in $j$,
\[
\int_{A_1} |\nb u|^q \le \sum_{j=1}^N \int_{B_{\si}(x_j)} |\nb u|^q
\lec  \sum_{j=1}^N  \int_{B_{2\si}(x_j)} \bke{\frac 1{\si^q} |u|^q + |F|^q}
\lec \int_{\hA_1} \bke{\frac 1{\si^q} |u|^q + |F|^q}.
\]
This shows the estimate of $\norm{\nb u}_{L^q}$ in \eqref{ST-eq2}.
Apply Lemma \ref{p-est} to $E=A_1$,
\[
\norm{p-(p)_{A_1}}_{L^q(A_1)} \le C_0   \sup_{ \zeta \in W^{1,q'}_0(A_1),\ \norm{\nb \zeta}_{L^{q'}(A_1)}=1} \int p \div \zeta
\]
where $C_0=2\CB(A_1,q')$. By Lemma \ref{Bog-annulus},  $C_0 \le C/\si$. Using the weak form of \eqref{Stokes},
\EQN{
\norm{p-(p)_{A_1}}_{L^q(A_1)} &\le \frac C\si   \sup_{ \zeta \in W^{1,q'}_0(A_1),\ \norm{\nb \zeta}_{L^{q'}(A_1)}=1} \int (\nb u+F) : \nb \zeta
\\
& \le  \frac C\si \bke{ \norm{\nb u}_{L^q(A_1)} + \norm{F}_{L^q(A_1)}}.
}
This completes the proof of \eqref{ST-eq2}.
\end{proof}

\begin{proof}[Proof of Theorem \ref{th1}]
By the standard regularity theory, the solution $u$ of \eqref{SNS} is smooth, $u\in C^\infty_\loc$. For any scalar function $\phi \in C^\infty_c(\R^3)$, we have the local energy equality
\EQ{\label{LEE}
\int |\nb u|^2\phi = \frac 12 \int| u|^2 \De \phi +  \frac 12 \int |u|^2 u\cdot \nb \phi +  \int (p-c) u\cdot \nb \phi
}
by testing \eqref{SNS} with $u\phi$. Above $c$ is any constant. 
Choosing $\phi = \zeta^2$ in \eqref{LEE}, $\zeta \in C^\infty_c$, we get
\EQ{\label{LEE2}
\int |\nb (u\zeta)|^2 &= \int |u|^2 |\nb \zeta |^2
 +   \int |u|^2 u\zeta\cdot \nb \zeta + 2 \int (p-c) u\zeta \cdot \nb \zeta
 \\
 &=I_1+I_2+I_3.
}

Fix $\Theta \in C^\infty(\R)$, $\Theta(t)=1$ for $t<0$, and $\Theta(t)=0$ for $t>1$. We now let $\zeta(x) = \Theta \bke{ \frac {|x|-(L+2\si)R}{4\si R}}$. Then $\zeta\in C^\infty_c(\R^3)$, $\zeta(x)=1$ for $|x|< R(1+2\si)$, $\zeta(x)=0$ for $|x|> R(1+6\si)$, and $|\nb^k \zeta| \le C(\si R)^{-k}$ for $k \in \NN$.
We have
\EQN{
|I_1|\lec (\si R)^{-2} \int_{A_R} |u|^2 \lec
 (\si R)^{-2} |A_R|^{1-2/q}  \bke{\int_{A_R} |u|^q}^{2/q}
\lec
 \bke{\si^{-\frac 12-\frac1{q}} R^{\frac 12-\frac3{q}} \norm{ u}_{L^{q}(\hA_R)} }^2.
}

By H\"older and Sobolev inequalities, for $0 \le \de \le 1$,
\EQN{
|I_2+I_3| &\le C\norm{\nb \zeta}_\infty \cdot \norm{u\zeta}_{6}^\de \cdot \norm{|u|^{3-\de}+|p-c|\cdot|u|^{1-\de}}_{\frac1{1-\de/6},A_R}\\
&\le C (\si R)^{-1} \norm{\nb(u\zeta)}_{2}^\de 
\cdot \bke{\norm{u}_{q}^2+  \norm{p-c}_{q/2}}
\cdot \norm{u}_{q,A_R}^{1-\de}
}
where $q=q(\de)=\frac{3-\de}{1-\de/6}$. By considering \eqref{SNS} as \eqref{Stokes} with $F=- u \otimes u$, we get from Lemma \ref{int-annulus} with $q$ replaced by $q/2$ that
\EQ{\label{eqHolder}
 \norm{p - (p)_{A_{R}}}_{L^{q/2}(A_{R})}
&\le \frac C{\si^2 R}\,\norm{u}_{L^{q/2}(\hA_R)} + \frac C{\si } \norm{|u|^2}_{L^{q/2}(\hA_R)}\\
&\le \frac C{\si^2 R}\,(\si R^3)^{1/q}\norm{u}_{L^{q}(\hA_R)} + \frac C{\si } \norm{u}_{L^{q}(\hA_R)}^2.
}
Thus, choosing $c= (p)_{A_{R}}$,
\EQN{
|I_2+I_3| &\le C (\si R)^{-1} \norm{\nb(u\zeta)}_{2}^\de \cdot
\bke{\si^{\frac 1{q}-2}R^{\frac 3{q}-1} \norm{ u}_{L^{q}(\hA_R)}  + \si^{-1 } \norm{ u}_{L^{q}(\hA_R)}^2 }
\cdot \norm{u}_{q,A_R}^{1-\de}.
}

We now let $\si$ vary and suppose $\si = \si_0 R^{-\al}$, $0\le \al<\infty$, $\si_0>0$. %
Then
\EQ{\label{eq3-6}
|I_1|\lec  \bke{ R^{\frac {1+\al}2+\frac{\al-3}{q}} \norm{ u}_{L^{q}(\hA_R)} }^2,
 \quad
|I_2+I_3| \le C \norm{\nb(u\zeta)}_{2}^\de \cdot J
}
where
\[
J=R^{\frac {3-\al}{q}-2+3\al}\norm{ u}_{L^{q}(\hA_R)}^{2-\de}+ R^{-1+2\al} \norm{ u}_{L^{q}(\hA_R)}^{3-\de}.
\]

By Young's inequality,
\[
|I_2+I_3| \le \frac12 \norm{\nb(u\zeta)}_{2}^2 
+ C J ^{\frac1{1-\de/2}}.
\]
Thus
\EQ{\label{eq3-7}
\int |\nb(u\zeta)|^2
\lec \bke{R^{\frac {1+\al}2+\frac{\al-3}{q}} \norm{ u}_{L^{q}(\hA_R)} }^2
+ J ^{\frac1{1-\de/2}}.
}
To make the right side go to zero, it suffices to find a sequence $R_j\to \infty$, $j\in \NN$, such that
\EQ{\label{eq3-8}
R_j^\be  \norm{ u}_{L^{q}( \hA_{R_j})} \to 0,
}
where $ \hA_{R_j} = \{ x \in \R^3:\  R_j<|x|<R_j(1+8\si_0R_j^{-\al})\}$ and
\[
\be =  \be(\de,\al)=\max \bket{ \frac {1+\al}2+\frac{\al-3}{q},\ \frac{\frac {3-\al}{q}-2+3\al}{2-\de},\ \frac {-1+2\al}{3-\de}}.
\]
Note that the first argument is never greater than the second for $\al \ge0$ and $\de \in [0,1]$, and they equal only if $\al=0$. Thus
\[
 \be(\de,\al)= \max \bket{ %
 \frac{\frac {3-\al}{q}-2+3\al}{2-\de},\ \frac {-1+2\al}{3-\de}}.
\]
If \eqref{eq3-8} is true, then by \eqref{eq3-7},
\[
\lim_{j \to \infty} \int_{|x|<R_j} |\nb u|^2 =0.
\]
Hence $\nb u=0$, $u$ is a constant vector $b$, and
$
R_j^\be  \norm{ u}_{L^{q}( \hA_{R_j})} = C |b| R_j^{\be + (3-\al)/q}
$. Since
$
\be + (3-\al)/q \ge \frac {-1+2\al}{3-\de} +  (3-\al)\frac{1-\de/6}{3-\de} > 0$, 
we get $b=0$ from \eqref{eq3-8}.
This shows both parts (a) and (b), noting that 
 $\be(\de,0)= -\frac 1{3-\de}$.
\end{proof}

\section{Half space}
\label{S4}
In this section we prove Theorem \ref{thmB} for $\Om=\R^3_+$. Its  boundary is $\Ga=\{ (x',0): x'\in \R^2\}$.
For $x_0 \in \Ga$, denote
\[
B_R^+(x_0)= B_R(x_0) \cap \R^3_+, \quad B_R^+= B_R^+(0).
\]
In addition to the interior estimate Lemma \ref{ST},
we will also use the following boundary estimate of Kang \cite{Kang}. Again, the key is that the pressure is not needed on the right side.

\begin{lemma}  \label{Kang}
If $(u,p)$ solves
\EQL{Stokes2}{
-\De u + \nb p = \div F, \quad \div u=0
}
in $B_{\ell R}^+ \subset \R^3_+$, with $u(x',0)=0$, $\ell>1$,
then for $1<q<\infty$ and $1\le m \le \infty$,
\EQ{\label{K-eq1}
\norm{\nb u}_{L^q(B^+_{R})} + \norm{p - (p)_{B^+_{R}}}_{L^q(B^+_{R})}
\le C R^{\frac 3q-\frac 3m -1}\,\norm{u}_{L^{m}(B^+_{\ell R})} + C \norm{F}_{L^q(B^+_{\ell R})},
}
where $C=C(q,m,\ell)$ does not depend on $R$.
\end{lemma}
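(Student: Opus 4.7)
The plan is to mirror the proof of the interior estimate Lemma~\ref{ST}, with ingredients adapted to the flat boundary $\{x_3=0\}$. By the natural scaling of the Stokes system I may take $R=1$, so the claim reduces to bounding $\|\nabla u\|_{L^q(B_1^+)}+\|p-(p)_{B_1^+}\|_{L^q(B_1^+)}$ by $\|u\|_{L^m(B_\ell^+)}$ and $\|F\|_{L^q(B_\ell^+)}$.

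For the pressure I apply Lemma~\ref{p-est} to $E=B_r^+$ with $1\le r<\ell$, which requires a Bogovskii map on a half-ball vanishing on all of $\partial B_r^+$. Such a map exists by Lemma~\ref{Bogovskii} since $B_r^+$ is a bounded Lipschitz domain, and its constant $\CB(B_r^+,q')$ is $r$-independent by the scaling part of that lemma. Testing the Stokes system weakly against an admissible $\zeta\in W^{1,q'}_0(B_r^+)$ and using $\zeta(x',0)=0$ gives $\int p\,\div\zeta = \int(\nabla u+F):\nabla\zeta$, which yields
\[
\|p-c\|_{L^q(B_r^+)}\lec \|\nabla u\|_{L^q(B_r^+)}+\|F\|_{L^q(B_r^+)}.
\]

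For the gradient I choose a cutoff $\eta\in C^\infty_c(\overline{\R^3_+})$ supported in $B_r^+$ and equal to $1$ on $B_1^+$; crucially $\eta$ need not vanish on $\{x_3=0\}$, since $u$ already does there. Letting $v$ be the Bogovskii extension of $u\cdot\nabla\eta$ on the half-annular region $\supp\nabla\eta$, the pair $(w,p\eta)$ with $w=u\eta-v$ solves a Stokes system on $B_r^+$ with zero Dirichlet condition and RHS involving $u$, $F$, $\nabla\eta$, $v$ and the commutator $p\,\nabla\eta$. The $L^q$ theory for the Dirichlet Stokes problem on a half-ball then yields $\|\nabla u\|_{L^q(B_1^+)}\lec \|u\|_{L^q(B_r^+)}+\|F\|_{L^q(B_r^+)}+\|p-c\|_{L^q(B_r^+)}$. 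Combining with the pressure estimate on a slightly larger half-ball $B_{r'}^+$ leaves a troublesome $\|\nabla u\|_{L^q(B_{r'}^+)}$ on the right, which a Campanato-type hole-filling iteration on a nested sequence $B_{r_k}^+\uparrow B_\ell^+$ absorbs into the left side. A final H\"older step converts $\|u\|_{L^q}$ to $\|u\|_{L^m}$ with the scaling factor $R^{3/q-3/m-1}$ once the scaling is undone.

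The main obstacle is the closure/iteration step that eliminates $\|\nabla u\|_{L^q}$ on the right: on the half-ball the translation symmetry is broken by the flat boundary, so the cutoff profiles and the sequence of radii must be chosen carefully to keep the geometric constants summable and to ensure absorption. The construction of the half-ball Bogovskii map with a scale-invariant bound is the other nontrivial ingredient, but its existence follows from standard Lipschitz-domain theory together with Lemma~\ref{Bogovskii}, and the fact that $u$ already vanishes on the flat portion of $\partial B_r^+$ means the commutator produced by the cutoff does not pick up any boundary term there.
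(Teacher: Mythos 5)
Note first that the paper does not actually prove Lemma~\ref{Kang}: it is quoted from Kang \cite{Kang}, just as the interior analogue Lemma~\ref{ST} is quoted from \cite{Sverak-Tsai}, so there is no in-paper proof to compare against. Your proposal must therefore be judged on its own. The preliminary steps are structurally sound: scaling to $R=1$, the Bogovskii map on the Lipschitz domain $B_r^+$, the weak-form pressure estimate $\norm{p-c}_{L^q(B_r^+)}\lesssim\norm{\nb u}_{L^q(B_r^+)}+\norm{F}_{L^q(B_r^+)}$, and the cutoff reduction to a zero-Dirichlet Stokes problem on a half-ball are all correct in outline.

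The genuine gap is precisely the step you flag as ``the main obstacle,'' and it is not merely technical. After the cutoff, the commutator $p\,\nb\eta$ contributes a term of size $(t-s)^{-1}\norm{p-c}_{L^q(B_t^+)}$ to the right-hand side, and inserting the Bogovskii pressure estimate converts this into $C(t-s)^{-1}\norm{\nb u}_{L^q(B_t^+)}$. A Giaquinta/Campanato iteration requires the coefficient of the absorbed term to be a \emph{uniform} $\theta<1$; here it is $C(t-s)^{-1}$, which diverges as the radii cluster. Hole-filling the annular version (adding $C(t-s)^{-1}\norm{\nb u}_{L^q(B_s^+)}$ to both sides and dividing) gives $\theta(t-s)=\frac{C/(t-s)}{1+C/(t-s)}$, which tends to $1$ as $t-s\to0$; for any admissible radii $r_k\uparrow\ell$ with $\sum_k(r_{k+1}-r_k)<\infty$ the product $\prod_k\theta(r_{k+1}-r_k)$ converges to a \emph{positive} limit rather than to zero, so the $\nb u$ term is not iterated away. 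The known proofs avoid this circularity by a different mechanism: decompose $u=u_1+u_2$, where $u_1$ is a particular solution built from the explicit half-space Stokes Green tensor (giving $\norm{\nb u_1}_{L^q}\lesssim\norm{F}_{L^q}$ directly), and $u_2=u-u_1$ solves the homogeneous Stokes system, for which pressure-free local regularity with only $\norm{u_2}_{L^m}$ on the right follows from the representation formula (or, in the interior, from biharmonicity of $u_2$). Without such an explicit representation, or a more refined local pressure decomposition, the cutoff-plus-Bogovskii iteration does not close.
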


We extend the above to a half annulus.

\begin{lemma} \label{int-halfannulus}
Let $R>0$,  $1<L\le 2$, and $\si=\frac 18(L-1)$. Denote the half annuli in $\R^3$
\[
A_R = B_{(L-2\si)R}^+ \setminus \overline B_{(1+2\si)R}^+, \quad 
\hA_R = B_{LR}^+ \setminus \overline B_R^+.
\]
If $(u,p)$ solves \eqref{Stokes2} in $\hA_R$, then for $1<q<\infty$ 
\EQ{\label{Kang-eq2}
\norm{\nb u}_{L^q(A_{R})} + \si \norm{p - (p)_{A_{R}}}_{L^q(A_{R})}
\le \frac C{\si R}\,\norm{u}_{L^{q}(\hA_R)} + C \norm{F}_{L^q(\hA_R)},
}
where $C=C(q)$ is uniform in $R>0$ and $\si \in (0,\frac 18]$.
\end{lemma}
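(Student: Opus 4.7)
The plan is to adapt the proof of Lemma \ref{int-annulus} with essentially one new ingredient: near the flat boundary $\Gamma \cap \hA_R$ I replace the interior Stokes estimate by Kang's boundary estimate. By scaling I may assume $R=1$. Choose a finite family of points $\{x_j\} \subset \overline{A_1}$, of cardinality $N=N(L)$ depending on $L$ only, such that the balls $B_\sigma(x_j)$ (for $x_j$ with $\operatorname{dist}(x_j,\Gamma) \ge 2\sigma$) together with the half-balls $B^+_\sigma(x_j)$ (for $x_j \in \Gamma \cap \overline{A_1}$) cover $A_1$, with a $\sigma$-independent bound on the overlap of the doubled balls $B_{2\sigma}(x_j)$ or $B^+_{2\sigma}(x_j)$. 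By the geometry of the $2\sigma$-buffer in the definitions of $A_1$ and $\hA_1$, each doubled (half-)ball sits inside $\hA_1$.

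On each interior ball apply Lemma \ref{ST} and on each boundary half-ball apply Lemma \ref{Kang} (using the zero boundary condition $u(x',0)=0$), both with $m=q$ and radius $\sigma$; summing over $j$ yields
\[
\int_{A_1}|\nb u|^q \;\lesssim\; \int_{\hA_1}\!\Bigl(\sigma^{-q}|u|^q + |F|^q\Bigr),
\]
which is the $\nb u$ part of \eqref{Kang-eq2}.

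For the pressure, apply Lemma \ref{p-est} to the half-annulus $E=A_1$. The key input is that Lemma \ref{Bog-annulus} is already stated for both annuli and half-annuli, so $\CB(A_1,q') \le C/\sigma$. For any $\zeta \in W^{1,q'}_0(A_1)$, integrating the weak form of \eqref{Stokes2} by parts produces no boundary terms on $\Gamma \cap \partial A_1$ because $\zeta$ vanishes there, so
\[
\int p \div \zeta \;=\; \int (\nb u + F):\nb \zeta \;\le\; \bigl(\norm{\nb u}_{L^q(A_1)} + \norm{F}_{L^q(A_1)}\bigr)\norm{\nb \zeta}_{L^{q'}(A_1)}.
\]
Combining with the $\nb u$ bound above gives the pressure estimate in \eqref{Kang-eq2} and undoing the rescaling restores the $R$-weights.

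The main conceptual point (rather than obstacle) is ensuring that Lemma \ref{Kang}'s right-hand side avoids the pressure, which it does by hypothesis, and that the Bogovskii map supplied by Lemma \ref{Bog-annulus} lies in $W^{1,q'}_0(A_1)$ so that the half-annulus version of Lemma \ref{p-est} applies without picking up a trace on $\Gamma$. Both are already built into the cited lemmas, so the proof is routine once the covering and the half-annulus Bogovskii bound are in place.
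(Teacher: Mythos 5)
Your proof is correct and follows the same route as the paper: a covering of $A_1$ by interior balls (where Lemma~\ref{ST} applies) and boundary half-balls (where Lemma~\ref{Kang} applies with the zero boundary condition), then Lemma~\ref{p-est} with the half-annulus Bogovskii map of Lemma~\ref{Bog-annulus} for the pressure. The only cosmetic difference is the covering construction: the paper uses interior balls of radius $\sigma/2$ centered at height $x_3\ge \sigma$ so that they cover down to $x_3 > \sigma/2$ and the two families genuinely overlap, whereas your interior balls of radius $\sigma$ centered at $x_3 \ge 2\sigma$ leave the measure-zero slab $\{x_3=\sigma\}$ uncovered --- harmless for the $L^q$ estimate.
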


Note that the $\si$ factor appears in both sides of \eqref{Kang-eq2}.

\begin{proof}
We may assume $R=1$ by scaling. 
We first choose $N_1=N_1(L)$ points $x^{(j)}\in \overline{ A_1} \cap \Ga$, $j=1,\ldots,N_1$, such that
\[
A_1 \cap\bket{(x',x_3): 0 < x_3< \frac 34\si} \subset \cup_{j=1}^{N_1} B_j, \quad B_j = B_{\si}^+(x^{(j)}).
\]
We then choose $N_2=N_2(L)$ points $x^{(j)}\in A_1$ with $x^{(j)}_3\ge \si$, $j=N_1+1,\ldots,N$ with $N=N_1+N_2$, such that
\[
A_1 \cap\bket{(x',x_3): \frac 34\si\le x_3} \subset \cup_{j=N_1+1}^{N} B_j, \quad B_j = B_{\si/2}(x^{(j)}).
\]
We can choose them in a way that there is a $\si$-independent upper bound for 
the number of overlapping of $ B_j$ for $\si \in (0,\frac18]$. 
We also denote $\widehat B_j =  B_{2\si}^+(x^{(j)})$ for $j\le N_1$ and $\widehat B_j =  B_{\si}(x^{(j)})$ for $j> N_1$. 
Note that $B_j$ and $\widehat B_j$ are half balls for $1\le j\le N_1$ and balls for $N_1+1\le j \le N$. Also note that  $\widehat B_j \subset \hA_1$.
It follows that
\EQL{thmB-eq3}
{
A_1 \subset \cup_{j=1}^N B_j \subset  \cup_{j=1}^N  \widehat B_j  \subset \hA_1.
}

By Lemma \ref{Kang} with given $q>1$, $m=q$, and $R=\si$,
\[
\int_{B_{\si}^+(x^{(j)})} |\nb u|^q
\lec \int_{B^+_{2\si}(x^{(j)})} \bke{\frac 1{\si^q} |u|^q + |F|^q}, \quad (1\le j \le N_1).
\]
By Lemma \ref{ST} with given $q>1$, $m=q$, and $R=\si/2$,
\[
\int_{B_{\si/2}(x^{(j)})} |\nb u|^q
\lec \int_{B_{\si}(x^{(j)})} \bke{\frac 1{\si^q} |u|^q + |F|^q}, \quad (N_1< j \le N).
\]
Summing in $j$,
\[
\int_{A_1} |\nb u|^q \le \sum_{j=1}^N \int_{B_j} |\nb u|^q
\lec  \sum_{j=1}^N  \int_{\widehat B_j } \bke{\frac 1{\si^q} |u|^q + |F|^q}
\lec \int_{\hA_1} \bke{\frac 1{\si^q} |u|^q + |F|^q}.
\]
This shows the estimate of $\norm{\nb u}_{L^q}$ in \eqref{Kang-eq2}.
Apply Lemma \ref{p-est} to $E=A_1$,
\[
\norm{p-(p)_{A_1}}_{L^q(A_1)} \le C_0   \sup_{ \zeta \in W^{1,q'}_0(A_1),\ \norm{\nb \zeta}_{L^{q'}(A_1)}=1} \int p \div \zeta
\]
where $C_0=2\CB(A_1,q')$. By Lemma \ref{Bog-annulus},  $C_0 \le C/\si$. Using the weak form of \eqref{Stokes2},
\EQN{
\norm{p-(p)_{A_1}}_{L^q(A_1)} &\le \frac C\si   \sup_{ \zeta \in W^{1,q'}_0(A_1),\ \norm{\nb \zeta}_{L^{q'}(A_1)}=1} \int (\nb u+F) : \nb \zeta
\\
& \le  \frac C\si \bke{ \norm{\nb u}_{L^q(A_1)} + \norm{F}_{L^q(A_1)}}.
}
This completes the proof of \eqref{Kang-eq2}.
\end{proof}

\begin{proof}[Proof of Theorem \ref{thmB}]
It is the same as the proof of Theorem \ref{th1}, with Lemma \ref{int-annulus}  replaced by Lemma \ref{int-halfannulus}. We use the zero boundary condition when we integrate by parts and when we apply the Sobolev inequality.
\end{proof}

\section{Periodic slab}
\label{S5}
In this section we will prove Theorem \ref{thmC} for the periodic slab $\Om=\R^2\times (\R / \ZZ)$. We can identify our velocity field $v(x)=v(x',x_3)$ as a vector field defined in $\R^3$ that is periodic in $x_3$, and %
the system \eqref{SNS} is satisfied in the whole $\R^3$. %

We first extend the interior estimate, Lemma \ref{int-annulus}, to a region enclosed by cylinders.

\begin{lemma} \label{int-cyl}
Let $R>0$,  $1<L\le 2$, and $\si=\frac 18(L-1)$. Denote the 3D regions
\[
E_R := A_R \times(0,R), \quad \hE_R := \hA_R \times (-2\si R, (1+2\si)R),
\]
where
\[
A_R = B_{(L-2\si)R}' \setminus \overline{ B_{(1+2\si)R}'}, \quad
\hA_R = B_{LR}' \setminus \overline{B_R' }
\]
are 2D annuli, and $B_R'=\bket{ x' \in \R^2: |x'|<R}$.  
If $(u,p)$ solves the Stokes system \eqref{Stokes} in $\hE_R$, then for $1<q<\infty$ 
\EQ{\label{ST-cyl}
\norm{\nb u}_{L^q(E_{R})} + \si \norm{p - (p)_{E_{R}}}_{L^q(E_{R})}
\le \frac C{\si R}\,\norm{u}_{L^{q}(\hE_R)} + C \norm{F}_{L^q(\hE_R)},
}
where $C=C(q)$ is uniform in $R>0$ and $\si \in (0,\frac 18]$.
\end{lemma}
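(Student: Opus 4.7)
The strategy mirrors the proofs of Lemmas \ref{int-annulus} and \ref{int-halfannulus}. First I would reduce to $R=1$ by scaling, using that both \eqref{Stokes} and the Bogovskii constant $\CB$ are dilation-invariant by the last sentence of Lemma \ref{Bogovskii}.

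For the velocity part of \eqref{ST-cyl}, I would build a cover $E_1 \subset \bigcup_{j=1}^N B_{\si/2}(x^{(j)})$ with $N=N(L)$ centers in $E_1$, doubles $B_\si(x^{(j)}) \subset \hE_1$, and overlap multiplicity bounded uniformly in $\si \in (0,\tfrac18]$. Such a cover exists because the distance from $E_1$ to $\partial \hE_1$ is at least $2\si$ both radially (by the definition of $A_1$ versus $\hA_1$) and vertically (since $(0,1) \subset (-2\si, 1+2\si)$ with buffer $2\si$ at each end). All these balls are interior, so applying Lemma \ref{ST} with $m=q$ on each and summing exactly as in Lemmas \ref{int-annulus}--\ref{int-halfannulus} yields
\[
\norm{\nb u}_{L^q(E_1)}^q \lec \si^{-q}\norm{u}_{L^q(\hE_1)}^q + \norm{F}_{L^q(\hE_1)}^q.
\]

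For the pressure I would invoke Lemma \ref{p-est} with $E=E_1$, and then test the weak form of \eqref{Stokes} against the supremizing $\zeta$ to get $\int p\div \zeta = \int(\nb u + F):\nb \zeta$. This produces
\[
\norm{p-(p)_{E_1}}_{L^q(E_1)} \lec \CB(E_1,q')\bke{\norm{\nb u}_{L^q(E_1)} + \norm{F}_{L^q(E_1)}},
\]
which, combined with Step~1, gives the pressure half of \eqref{ST-cyl} provided one can show $\CB(E_1,q') \lec 1/\si$.

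The main obstacle is exactly this Bogovskii bound on $E_1$, because $E_1$ is a 2D annulus of inner radius $1+2\si$ and thickness $\approx 4\si$ crossed with the interval $(0,1)$, whereas Lemma \ref{Bog-cylinder} is stated for regions where the inner radius and height both equal $R$. I expect this to be resolved by a direct inspection of the construction in the proof of Lemma \ref{Bog-cylinder}: the only nontrivial change of variables there acts radially via $\tau = 1 + (r-(1+2\si))/(4\si)$ and leaves $z$ untouched, so the $1/(L-1)$-type blow-up again produces the factor $1/\si$, while the height only contributes an absolute constant since it is comparable to the inner radius. Alternatively, one can first rescale $E_1$ by $1/(1+2\si)$ so the inner radius becomes $1$ and the height becomes $1/(1+2\si) \in [\tfrac12,1]$, then run the same construction on this slightly modified slab, and transfer back; the Bogovskii constant is insensitive to the $O(1)$ deformation of the height. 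Either route delivers $\CB(E_1,q') \lec 1/\si$ and completes the proof.
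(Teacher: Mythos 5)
Your proposal is correct and follows essentially the same route as the paper, which dispenses with Lemma~\ref{int-cyl}'s proof in one sentence: rescale to $R=1$, cover $E_1$ by balls of radius $\sim\si$ with doubled balls inside $\hE_1$, apply Lemma~\ref{ST} on each and sum, then use Lemma~\ref{p-est} together with a Bogovskii bound on $E_1$. Your observation that $E_1 = A_1\times(0,1)$ does not literally fit the template of Lemma~\ref{Bog-cylinder} (height equal to inner radius) is a genuine small gap that the paper's one-line remark glosses over, and both of your proposed fixes --- inspecting the construction to see the height only enters through an $O(1)$ deformation, or prescaling by $1/(1+2\si)$ so the height lands in $[\tfrac45,1]$ --- are valid and deliver the needed bound $\CB(E_1,q')\lec 1/\si$.
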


 Its proof is identical to the proof of Lemma \ref{int-annulus}, with the reference to Lemma \ref{Bog-annulus} replaced by Lemma \ref{Bog-cylinder}.

\begin{proof}[Proof of Theorem \ref{thmC}]
By the standard regularity theory, $u\in C^\infty_\loc$.  Using the periodic BC, we still have the local energy equality
\eqref{LEE2} for any scalar function $\zeta \in C^\infty_c( \Om)$ and any constant $c$,
\EQ{\label{LEE3}
\int_\Om |\nb (u\zeta)|^2 &= \int_\Om |u|^2 |\nb \zeta |^2
 +   \int_\Om |u|^2 u\zeta\cdot \nb \zeta + 2 \int_\Om (p-c) u\zeta \cdot \nb \zeta
 \\
 &=I_1+I_2+I_3.
}

By considering \eqref{SNS} as \eqref{Stokes} with $F=- u \otimes u$, we get from Lemma \ref{int-cyl} with $q$ replaced by $q/2$ that
\[
 \norm{p - (p)_{E_{R}}}_{L^{q/2}(E_{R})}
\le \frac C{\si^2 R}\,\norm{u}_{L^{q/2}(\hE_R)} + \frac C{\si } \norm{|u|^2}_{L^{q/2}(\hE_R)}.
\]
Raising to the $q/2$-th power and using the periodicity with $L\le 2 \ll R$,
we get
\EQN{
R\int_0^1 \int_{A_R} |p-(p)_{E_R}|^{q/2}  &\le  \frac {CR}{\si^q R^{q/2}} \int_0^1 \int_{\hA_R}|u|^{q/2} +  \frac {CR}{\si^{q/2} }\int_0^1 \int_{\hA_R}|u|^q.
}
Thus, with $\Om_R:= \hA_R\times(0,1)$,
\EQL{p-ARper-est}{
 \norm{p - (p)_{E_{R}}}_{L^{q/2}(A_{R}\times(0,1))}
&\le \frac C{\si^2 R}\,\norm{u}_{L^{q/2}(\Om_R)} + \frac C{\si } \norm{u}^2_{L^{q}(\Om_R)}
\\
&\le \frac C{\si^2 R}\,(\si R^2)^{1/q}\norm{u}_{L^{q}(\Om_R)} + \frac C{\si } \norm{u}^2_{L^{q}(\Om_R)}.
}
Here we use $|\Om_R|=C \si R^2$, which is different from $|A_R|=C\si R^3$ in the proofs of Theorems \ref{th1} and \ref{thmB}.

Fix $Z \in C^\infty_c(\R^2)$, $Z(x')=1$ for $|x'|< 1+ 2\si$, $Z(x')=0$ for $|x'|>L-2\si$. Thus $\nb Z$ is supported in $\overline{A_1}$. Let $\zeta(x) =\zeta_R(x)= Z(\frac {x'}R)$ in the local energy equality \eqref{LEE3}, with $\nb \zeta_R$ supported in $\overline{A_R}\times (\R /\ZZ)$.  Note $\zeta$ does not depend on $x_3$. Choose $c= (p)_{E_{R}}$.
By H\"older and Sobolev inequalities, for $0 \le \de \le 1$ and $q=q(\de)=\frac{3-\de}{1-\de/6}$,
\EQN{
|I_2+I_3| &\le C\norm{\nb \zeta}_\infty \cdot \norm{u\zeta}_{6}^\de \cdot \norm{|u|^{3-\de}+|p-c|\cdot|u|^{1-\de}}_{\frac1{1-\de/6},A_R
\times(0,1)}\\
&\le C (\si R)^{-1} \norm{\nb(u\zeta)}_{2}^\de 
\cdot \bke{\norm{u}_{q}^2+  \norm{p-c}_{q/2,\, A_R\times(0,1)}}
\cdot \norm{u}_{q,\, A_R\times(0,1)}^{1-\de}.
}
By \eqref{p-ARper-est},
\[
|I_2+I_3|
 \le C (\si R)^{-1} \norm{\nb(u\zeta)}_{2}^\de \cdot
\bke{\frac 1{\si^2 R}\,(\si R^2)^{1/q}\norm{u}_{L^{q}(\Om_R)} + \frac 1{\si } \norm{u}^2_{L^{q}(\Om_R)}}
\cdot \norm{u}_{q,\Om_R}^{1-\de}.
\]

We now let $\si$ vary and suppose $\si = \si_0 R^{-\al}$, $0\le \al<\infty$, $\si_0>0$.
Then
we have
\EQN{
|I_1| &\lec (\si R)^{-2} \int_{\Om_R} |u|^2 \lec
 (\si R)^{-2} (\si R^2)^{1-2/q} \norm{u}_{q,\Om_R}^2\\
&=
 \bke{\si^{-\frac 12-\frac1{q}} R^{-\frac2{q}} \norm{ u}_{L^{q}(\Om_R)} }^2
=C\bke{ R^{\frac \al 2+\frac{\al-2}{q}} \norm{ u}_{L^{q}(\Om_R)} }^2.
}
We also have by Young's inequality,
\EQN{
|I_2+I_3| &\le C \norm{\nb(u\zeta)}_{2}^\de \cdot J  \le \frac12 \norm{\nb(u\zeta)}_{2}^2 
+ C J ^{\frac1{1-\de/2}},
\\
J&=R^{\frac {2-\al}{q}-2+3\al}\norm{ u}_{L^{q}(\Om_R)}^{2-\de}+ R^{-1+2\al} \norm{ u}_{L^{q}(\Om_R)}^{3-\de}.
}
Thus
\EQ{\label{eq5-4}
\int |\nb(u\zeta)|^2
\lec \bke{ R^{\frac \al 2+\frac{\al-2}{q}} \norm{ u}_{L^{q}(\Om_R)} }^2
+ J ^{\frac1{1-\de/2}}.
}
To make the right side go to zero, it suffices to find a sequence $R_j\to \infty$, $j\in \NN$, such that
\EQ{\label{eq5-8}
R_j^\be  \norm{ u}_{L^{q}( \Om_{R_j})} \to 0,
}
where $ \Om_{R_j} = \{ x=(x',x_3) \in \Om:\  R_j<|x'|<R_j(1+8\si_0R_j^{-\al})\}$ and
\[
\be = \be_{ps}(\de,\al)=\max \bket{ \frac {\al}2+\frac{\al-2}{q},\ \frac{\frac {2-\al}{q}-2+3\al}{2-\de},\ \frac {-1+2\al}{3-\de}}=\max \{\be_1,\be_2,\be_3\}.
\]
Numerically $\be_1 \le\be_3$ for $\al \le 2$, and $\be_1 \le\be_2$ for $\al \ge 0.2$. Thus we can drop $\be_1$,
\[
\be_{ps}(\de,\al)=\max \bket{ \frac{\frac {2-\al}{q}-2+3\al}{2-\de},\ \frac {-1+2\al}{3-\de}}.
\]

If \eqref{eq5-8} is true, then by \eqref{eq5-4},
\[
\lim_{j \to \infty} \int_{|x'|<R_j} |\nb u|^2 =0.
\]
Hence $\nb u=0$, $u$ is a constant vector $b$, and
$
R_j^\be  \norm{ u}_{L^{q}( \Om_{R_j})} = C |b| R_j^{\be + (2-\al)/q}
$. Since
$
\be + (2-\al)/q \ge \frac {-1+2\al}{3-\de} +  (2-\al)\frac{1-\de/6}{3-\de} > 0$, 
we get $b=0$ from \eqref{eq5-8}.
This shows both parts (a) and (b), noting that $\be_{ps}(\de,0)= -\frac {1}{3-\de}$.
\end{proof}

\section{Zero BC slab}
\label{S6}
In this section we prove Theorem \ref{th4}. The domain is $\Om=\R^2\times (0,1)$ and the vector field $u$ satisfies the zero boundary condition $u(x',0)=u(x',1)=0$. Its proof is different from those for Theorems \ref{th1}-\ref{thmC} as we cannot obtain the local pressure estimate by scaling. We also cannot vary the radii ratio $L$.

Denote $B_R'=\{ x' \in \R^2: \ |x'|<R\}$ and $B_R'(x_0')=\{ x' \in \R^2: \ |x'-x_0'|<R\}$.
For given $L>1$, let $\si=\frac 18(L-1)$. Denote
the 2-D annuli
\[
A_R = B_{(L-2\si)R}' \setminus B_{(1+2\si)R}', \quad
\hA_R = B_{LR}' \setminus B_R' .
\]

We will use a variation of \cite[Proposition 2.1]{CPZZ}:

\begin{lemma}\label{CPZZ} There is a (linear) \emph{Bogovskii map} $\Bog_R$ on $E=A_R \times (0,1)$, $R \ge 1$, that maps  $f\in L^q_0(E)=\{ f \in L^q(E):\ \int_E f =0\}$ to $v = \Bog_R f \in W^{1,q}_0(E;\R^3)$ satisfying
\[
\div v = f, \quad 
\norm{\nb v}_{L^q(E)} \le C_{q,L} R \norm{f}_{L^q(E)}.
\]
\end{lemma}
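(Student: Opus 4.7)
The plan is to reduce Lemma \ref{CPZZ} to the already-proven Lemma \ref{Bog-cylinder} by an anisotropic rescaling that stretches the thin $x_3$-direction to match the in-plane scale. Write $A_R = B'_{R_2}\setminus \overline{B'_{R_1}}$ with $R_1 := (1+2\sigma)R$ and $R_2 := (L-2\sigma)R$, set $L' := R_2/R_1 = (3L+1)/(L+3)$, and note that $L' \in (1,3)$ for every $L>1$, so $L'$ always lies in the range $(1,10)$ allowed by Lemma \ref{Bog-cylinder}. I would then change variables by $y = (x_1,x_2,R_1 x_3)$, so that $E = A_R \times (0,1)$ is mapped bijectively onto
\[
\tilde E \;:=\; A_R \times (0, R_1) \;=\; (B'_{L' R_1}\setminus \overline{B'_{R_1}})\times (0,R_1),
\]
which is precisely the cylindrical region that Lemma \ref{Bog-cylinder} handles.

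Pull $f$ to $\tilde f(y) := f(y_1,y_2,y_3/R_1)$ on $\tilde E$. The Jacobian $|dy/dx|=R_1$ gives $\int_{\tilde E}\tilde f\,dy = R_1 \int_E f\,dx = 0$, so $\tilde f\in L^q_0(\tilde E)$, and Lemma \ref{Bog-cylinder} produces $\tilde v \in W^{1,q}_0(\tilde E;\R^3)$ with $\div_y \tilde v = \tilde f$ and $\norm{\nb_y \tilde v}_{L^q(\tilde E)} \le (C_q/(L'-1))\,\norm{\tilde f}_{L^q(\tilde E)}$. Then pull $\tilde v$ back anisotropically by setting
\[
v(x) \;:=\; \bigl(\tilde v_1(y),\; \tilde v_2(y),\; R_1^{-1}\tilde v_3(y)\bigr), \qquad y = (x_1,x_2,R_1 x_3).
\]
The $R_1^{-1}$ weight on the third component is exactly what cancels the chain-rule factor $\partial_{x_3} = R_1\,\partial_{y_3}$, so $\div_x v = f$; the zero boundary condition transfers automatically since the stretching sends $\partial E$ onto $\partial \tilde E$. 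The resulting map $\Bog_R : f \mapsto v$ is linear by construction.

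For the norm estimate, the volume factor gives $\norm{\tilde f}_{L^q(\tilde E)} = R_1^{1/q}\norm{f}_{L^q(E)}$. The dominant entries of $\nb_x v$ are $\partial_{x_3}v_j = R_1\,\partial_{y_3}\tilde v_j$ for $j=1,2$, whose $L^q(E)$-norm equals $R_1^{1-1/q}$ times $\norm{\partial_{y_3}\tilde v_j}_{L^q(\tilde E)}$; combining with the Bogovskii bound above yields
\[
\norm{\partial_{x_3}v_j}_{L^q(E)} \le \frac{C_q}{L'-1}\,R_1\,\norm{f}_{L^q(E)} \le C_{q,L}\,R\,\norm{f}_{L^q(E)},
\]
while every other entry of $\nb_x v$ carries no positive power of $R$ and is dominated by $C_{q,L}\norm{f}_{L^q(E)}$. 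The claimed bound follows.

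The only delicate point is the bookkeeping: choosing the $R_1^{-1}$ weight on $v_3$ so that $\div v = f$ holds with no spurious constant, and tracking the Jacobian powers so that precisely one factor of $R$ emerges in the worst gradient entry. No covering, partition of unity, or flux-redistribution argument is needed — Lemma \ref{Bog-cylinder} absorbs the geometric complexity of the annulus, and the $R$ factor in the conclusion is the unavoidable price of flattening a slab of aspect ratio $R$.
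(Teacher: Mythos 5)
Your proof is correct, and it takes a route that differs from the paper's. The paper compresses the horizontal directions by $1/R$, sending $E = A_R\times(0,1)$ to the \emph{fixed} reference domain $E_1 = A_1\times(0,1)$, on which a single Bogovskii map $\Bog_1$ is fixed once and for all via Lemma \ref{Bogovskii}; the anisotropic pullback $v(x) = (R\bar v_1(\bar x), R\bar v_2(\bar x), \bar v_3(\bar x))$ with $\bar x=(x_1/R,x_2/R,x_3)$ then supplies the factor $R$ in the dominant gradient entries $\partial_{x_3}v_1,\partial_{x_3}v_2$. You instead stretch $x_3$ by $R_1$, sending $E$ to the $R_1$-scaled annular cylinder $\tilde E = A_R\times(0,R_1)$, and invoke the scale-invariant, specially constructed map of Lemma \ref{Bog-cylinder}; your weighted pullback $v_3 = R_1^{-1}\tilde v_3$ puts the factor $R_1$ on the same dominant entries. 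Both are anisotropic rescalings, and the $R$-factor is the same aspect-ratio price in either case, but the paper relies on the generic Bogovskii lemma on a fixed domain whereas you exploit the explicit cylinder construction that the paper already developed in \S\ref{S2}. Your observation that the radii ratio $L' = (3L+1)/(L+3)\in(1,3)$ keeps you inside the hypotheses of Lemma \ref{Bog-cylinder} is a genuine and useful point, and the Jacobian bookkeeping is carried out correctly. One small remark: the lemma only needs $L$ fixed with $L\le 2$ in the application, so either chain of constants is fine; your version has the minor aesthetic advantage of reusing the paper's own quantitative Bogovskii lemma rather than appealing to the black-box constant $\CB(E_1,q')$.
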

The key is that the constant grows linearly in $R$.
The proof is the same as \cite[Proposition 2.1]{CPZZ} which is formulated for $E=B_R'\times(0,1)$ and $q=2$. The idea is to fix $\Bog_1$ for $E_1=A_1\times (0,1)$, and for given $f(x) \in L^q_0(E)$, define $\bar f(\bar x) = f(x)$  for $\bar x \in E_1$ with $x=(R\bar x_1, R\bar x_2, \bar x_3)$, let $\bar v = \Bog_1 \bar f$, and then $v=\Bog_R f$ is defined by $v(x) = (R\bar v_1(\bar x), R\bar v_2(\bar x), \bar v_3(\bar x))$. %

\begin{proof}[Proof of Theorem \ref{th4}]
We may assume $L\le 2$ and let $A_R$ and $\hA_R$ be defined as above.
By the standard regularity theory, $u\in C^\infty_\loc(\overline \Om)$.  We still have the local energy equality
\eqref{LEE} for any scalar function $\phi \in C^\infty_c(\Om)$.

There are $N_0=N_0(L)\in \NN$ and $R_0=R_0(L)\gg 1$ such that, if $R \ge R_0$, then there are $N\le N_0 R^2$ points $x_j\in A_R$, $j=1,\ldots,N$, such that
\EQL{th4-eq3}{
A_R \subset \cup_{j=1}^N B_{1/2}'(x_j) \subset \cup_{j=1}^N B_{1}'(x_j) \subset \hA_R,
}
and there is an $R$-independent upper bound for
the number of overlapping of $ B_j'(x_j)$. 
Unlike in the proofs of the previous theorems, the choice of $x_j$ is not by scaling, and the radii do not depend on $L$.

We can rewrite \eqref{SNS} as \eqref{Stokes} with $F=-u\otimes u$.
By Lemma \ref{Kang} and zero BC at $x_3=0$, for $q=m=r>1$ and $\ell=\sqrt2$,
\[
\norm{\nb u}_{L^r(B_{1/2}'(x_j)  \times (0,\frac 12))}^r \lec \norm{ u}_{L^{r}(B_{1}'(x_j)  \times (0,1))}^r  + \norm{ u}_{L^{2r}(B_{1}'(x_j)  \times (0,1))}^{2r} .
\]
In applying Lemma \ref{Kang} we have used the inclusions
\[
B_{\frac 12}'(x_j)  \times (0,\tfrac 12)
\subset B^+_{\sqrt 2/2} ((x_j,0))
\subset B^+_{1} ((x_j,0))
\subset B_{1}'(x_j)  \times (0,1).
\]
Similarly, by Lemma \ref{Kang} and zero BC at $x_3=1$,
\[
\norm{\nb u}_{L^r(B_{1/2}'(x_j)  \times (\frac12,1))}^r \lec \norm{ u}_{L^{r}(B_{1}'(x_j)  \times (0,1))}^r  + \norm{ u}_{L^{2r}(B_{1}'(x_j)  \times (0,1))}^{2r} .
\]
Denote $E_R=A_R\times (0,1)$ and $\Om_R=\hA_R\times (0,1)$.
Summing the two estimates, summing in $j$ and using \eqref{th4-eq3}, we get
\EQL{th4-er4}{
\norm{\nb u}_{L^r(E_R)}^r \lec \norm{ u}_{L^{r}( \Om_R)}^r  + \norm{ u}_{L^{2r}(\Om_R)}^{2r} .
}
By Lemma \ref{p-est} and Lemma \ref{CPZZ} with $p_R =\frac 1{|A_R|}\int_0^1\int_{A_R} p$,
\[
\norm{p-p_R}_{L^r(E_R)}
\le CR  \sup_{ \zeta \in W^{1,r'}_0(E_R),\ \norm{\nb \zeta}_{L^{r'}(E_R)}=1} \int p \div \zeta.
\]
Using weak form of Stokes system \eqref{Stokes}, and then
\eqref{th4-er4},
\EQL{th4-eq5}{
\norm{p-p_R}_{L^r(E_R)}
&\lec R
\bke{\norm{\nb u}_{L^r(E_R)} + \norm{u\otimes u}_{L^r(E_R)} }
 \lec R \bke{\norm{ u}_{L^{r}(\Om_R )}  + \norm{ u}_{L^{2r}(\Om_R)}^{2}} .
}

By the same argument between \eqref{p-ARper-est} and \eqref{eq5-4} and same $\zeta_R$, with  $\al=0$ and $\si$ ignored being a constant, we get for $r>1$,
\EQ{\label{eq6.4}
\int_\Om |\nb (u\zeta)|^2 \lec \bke{ R^{-1/r} \norm{u}_{L^{2r}(\Om_R)}}^2 +  J ^{\frac1{1-\de/2}}
}
where
\[
J= \bke{ \norm{ u}_{L^{r}(\Om_R )}  + \norm{ u}_{L^{2r}(\Om_R)}^{2} }  \norm{ u}_{L^{s}(\Om_R )}^{1-\de},
\quad \frac{1-\de}s = 1-\frac \de 6 - \frac 1r.
\]
Note that the factor $R^{-1}$ from $|\nb \zeta|$ cancels the factor $R$ in \eqref{th4-eq5}.

If we set $s=2r$, then $s=2r=q(\de)=\frac{3-\de}{1-\de/6}$. Using $\norm{u}_{L^{q/2}(\Om_R )}  \lec R^{2/q} \norm{ u}_{L^{q}(\Om_R )}$, \eqref{eq6.4} becomes
\[
\int_\Om |\nb (u\zeta)|^2 \lec \bke{ R^{-2/q} \norm{u}_{L^q(\Om_R)}}^2 + \bke{R^{2/q} \norm{ u}^{2-\de}_{L^{q}(\Om_R )}  + \norm{ u}_{L^{q}(\Om_R)}^{3-\de}} ^{\frac1{1-\de/2}}.
\]
Thus, if condition \eqref{th4eq1} holds, there is a sequence $R_j \to \infty$ as $j \to \infty$ such that $R_j^{2/q} \norm{ u}^{2-\de}_{L^{q}(\Om_{R_j} )}\to 0$, then the above shows  $\lim_{j \to \infty}\int_{|x'|<R_j} |\nb u|^2=0$, and hence $u \equiv0$.

Alternatively, if condition \eqref{th4eq1b} holds that there is a sequence $R_j \to \infty$ as $j \to \infty$ such that $ \norm{ u}_{L^{r}(\Om_{R_j}  )}  + \norm{ u}_{L^{2r}(\Om_{R_j})} \to 0$, and suppose $r \le s \le 2r$, then $J \to 0$, $\lim_{j \to \infty}\int_{|x'|<R_j} |\nb u|^2=0$, and hence $u \equiv0$. The condition $r \le s \le 2r$ is equivalent to 
\[
r_1(\de) = \frac{3(3-\de)}{6-\de} \le r \le r_2(\de) = \frac{6(2-\de)}{6-\de}.
\]
Both $r_1,r_2$ are decreasing in $\de$ with $r_1(0)=3/2$, $r_2(0)=2$, and $r_1(1)=r_2(1)=6/5$. Hence for any $r \in [6/5,2]$, we can find $\de \in [0,1]$ such that $r_1(\de) \le r \le r_2(\de)$. This shows part (b).
\end{proof}

\end{document}